\newcommand{\E}{\mathbb E}
\newcommand{\R}{\mathbb R}
\newcommand{\tr}{\mathrm{tr}}
\newcommand{\ds}{\displaystyle}
\newtheorem{thm}{Theorem}[section]
\newtheorem{prop}[thm]{Proposition}
\theoremstyle{definition}
\theoremstyle{remark}
\begin{document}

\title[Minimal Lorentz Surfaces in Pseudo-Euclidean 4-Space]
{Minimal Lorentz Surfaces in Pseudo-Euclidean 4-Space with Neutral Metric}

\author{Yana  Aleksieva, Velichka Milousheva}

\address{Faculty of Mathematics and Informatics, Sofia University,
5 James Bourchier blvd., 1164 Sofia, Bulgaria}
\email{yana\_a\_n@fmi.uni-sofia.bg}

\address{Institute of Mathematics and Informatics, Bulgarian Academy of Sciences,
Acad. G. Bonchev Str. bl. 8, 1113, Sofia, Bulgaria}
\email{vmil@math.bas.bg}

\subjclass[2010]{Primary 53B30, Secondary 53A35, 53B25}
\keywords{Pseudo-Euclidean space with neutral metric, Lorentz surfaces, minimal surfaces}

\begin{abstract}
We study minimal Lorentz surfaces in the pseudo-Euclidean 4-space with neutral metric whose first normal space is two-dimensional and whose Gauss curvature $K$ and normal curvature $\varkappa$ satisfy the inequality $K^2-\varkappa^2 >0$. Such surfaces we call minimal Lorentz surfaces of general type. On any surface of this class 
we introduce geometrically determined canonical parameters and prove that any minimal Lorentz surface of general type is determined (up to a rigid motion) by two invariant functions satisfying  a system of two natural
partial differential equations. Using a concrete  solution to this system we construct an example of a minimal Lorentz surface of general type.
\end{abstract}

\maketitle

\section{Introduction}

The study of minimal surfaces is one of the main topics in classical differential geometry which  goes back to the latter part of the 18th century. Lagrange was the first who  initiated in 1760 the study of minimal surfaces in Euclidean 3-space and found the minimal surface equation  when he looked  for a necessary condition for minimizing a certain integral. Actually, the notion of mean curvature was first formally defined by Meusnier in 1776. Throughout the 19th century grate mathematicians such as Gauss and Weierstrass devoted much of their studies to these surfaces. The theory of minimal surfaces in real space forms have been attracting the attention of many mathematicians for more than two  centuries (see \cite{Chen-book}, \cite{Nitsche}, and the references therein).  

In  the last years, great attention is also paid to Lorentz surfaces in pseudo-Euclidean spaces, since pseudo-Riemannian geometry has many important applications in Physics. 
Minimal Lorentz surfaces in $\mathbb{C}^2_1$ have been classified recently by  B.-Y. Chen \cite{Chen-TaJM}.
Classification results for minimal Lorentz surfaces in  pseudo-Euclidean space $\E^m_s$ with arbitrary dimension $m$ and arbitrary index $s$  are obtained in \cite{Chen}.

Metrics of neutral signature $(+,+,-,-)$  in dimension four appear in many geometric and physics problems  as well as  in string theory.
In the present paper we study minimal Lorentz surfaces in the pseudo-Euclidean 4-space with neutral metric $\E^4_2$.  
Our aim is to characterize the minimal Lorentz surfaces in terms of a pair of smooth functions satisfying a system of two natural
partial differential equations.  This aim is motivated by similar results concerning  minimal surfaces in the four-dimensional Euclidean space $\E^4$  (see \cite{TrGua}) and spacelike or timelike surfaces with zero mean curvature in the Minkowski 4-space $\E^4_1$ (see  \cite{AP} and \cite{GM}). Our approach to the study of minimal Lorentz surfaces in $\E^4_2$ is based on the introducing of special geometric parameters which we call canonical parameters.

It is  known that the Gauss curvature $K$, the curvature of the normal connection $\varkappa$, and the mean curvature vector field $H$ of an arbitrary surface in the Euclidean space $\E^4$ satisfy the Wintgen inequality  \cite{Wintgen}
$$K + |\varkappa| \leq ||H||^2.$$  Following \cite{P-T&V}, a surface in $\E^4$ is called
\emph{Wintgen ideal surface}, if it satisfies the equality case of
the Wintgen's inequality identically. A surface $M$ is called \textit{minimal}, if its mean curvature vector  vanishes identically. 
Therefore, the invariants $K$ and $\varkappa$  of any minimal surface in $\E^4$ satisfy the inequality
$K^2-\varkappa^2\geq 0$, which divides the minimal surfaces into two basic classes:
\begin{itemize}
\item the class of minimal Wintgen ideal  surfaces characterized by
$K^2 - \varkappa^2 =0$; \item the class of minimal surfaces of general type characterized
by $K^2-\varkappa^2>0$.
\end{itemize}
The Wintgen ideal surfaces
are characterized by circular ellipse of normal curvature
\cite{Guad-Rod}. A surface $M$ in $\E^4$ is called \textit{super-conformal} \cite{BFLPP} if at any point of $M$
the ellipse of curvature is a circle. Hence, the class of minimal Wintgen ideal surfaces coincides with the
class of minimal super-conformal surfaces.
According to a result of Eisenhart \cite{Ein},  the class of minimal super-conformal surfaces in $\E^4$ is locally equivalent to the class
of holomorphic curves in  $\mathbb{C}^2 \equiv \E^4$.

In \cite{TrGua}, de Azevedo Tribuzy and Guadalupe proved that the Gauss curvature $K$ and the curvature of the normal connection
$\varkappa$ of any minimal non-super-conformal surface parametrized by special isothermal parameters in the Euclidean space $\E^4$ satisfy
the following system of  partial differential equations
\begin{equation*}
\begin{array}{l}
\ds{(K^2 - \varkappa^2)^{\frac{1}{4}}\, \Delta \ln (K^2 - \varkappa^2)} = 8 K\\
[2mm]
\ds{(K^2 - \varkappa^2)^{\frac{1}{4}}\, \Delta \ln \frac{K-\varkappa}{K + \varkappa}= -4 \varkappa}
\end{array}
\end{equation*} 
and conversely, any solution ($K$, $\varkappa$) to this system determines  a unique (up to a rigid
motion in $\E^4$) minimal non-super-conformal surface with Gauss
curvature $K$ and normal curvature $\varkappa$.

Similar results hold for surfaces with zero mean curvature in the 4-dimensional Minkowski space $\E^4_1$.
In \cite{AP},  Al\'{i}as and Palmer studied  spacelike surfaces with zero mean curvature  in $\E^4_1$ and proved that these surfaces are described by the following  system of  partial differential equations
\begin{equation*}
\begin{array}{l}
\ds{(K^2 + \varkappa^2)^{\frac{1}{4}}\, \Delta \ln (K^2 + \varkappa^2)} = 8 K \\
[2mm] \ds{(K^2 + \varkappa^2)^{\frac{1}{4}}\, \Delta \arctan
\frac{\varkappa}{K} = 2 \varkappa}
\end{array}
\end{equation*}
where $K$ and $\varkappa$ are the Gauss curvature and the normal
curvature, respectively.

In \cite{GM}, Ganchev and the second author developed the local theory of timelike surfaces with zero mean curvature in the 
Minkowski 4-space $\E^4_1$ and proved that the Gauss curvature $K$ and
the normal curvature $\varkappa$ of any  timelike  surface (parametrized by special, so called  canonical parameters) with zero mean curvature 
satisfy the following system of natural partial differential equations
\begin{equation}\notag
\begin{array}{l}
\ds{(K^2 + \varkappa^2)^{\frac{1}{4}}\, \Delta^h \ln (K^2 + \varkappa^2)} = 8 K \\
[2mm] \ds{(K^2 + \varkappa^2)^{\frac{1}{4}}\, \Delta^h \arctan
\frac{\varkappa}{K} = 2 \varkappa}
\end{array}
\end{equation}
where $\Delta^h$ denotes the hyperbolic Laplace operator.  Conversely, any solution ($K$, $\varkappa$) to the above
system, determines a unique (up to a rigid  motion in $\E^4_1$)  timelike
surface  with zero mean curvature such that $K$ is the  Gauss
curvature and $\varkappa$ is the normal curvature of the surface.

 Minimal Lorentz surfaces in the pseudo-Euclidean space $\E^4_2$ can  be divided into three 
 basic classes:
\begin{itemize}
\item surfaces characterized by $K^2 - \varkappa^2 > 0$; 
\item  surfaces characterized by  $K^2-\varkappa^2 = 0$;
\item surfaces characterized by  $K^2-\varkappa^2 < 0$.
\end{itemize}

In the present paper we study  minimal Lorentz  surfaces  from the first class, i.e. satisfying the inequality  $K^2 - \varkappa^2 > 0$. In the special case when the first normal space is one-dimensional at each point, the minimal surface is either a degenerate hyperplane, a flat umbilic surface, a quasi-umbilic surface, or lies in a non-degenerate hyperplane. We focus our attention on minimal Lorentz surfaces whose first normal space is two-dimensional and whose Gauss curvature $K$ and normal curvature $\varkappa$ satisfy $K^2 - \varkappa^2 > 0$ on a dense open subset. We call these surfaces \textit{minimal  surfaces of general type}, since their  local theory  can be developed analogously to the local theory of minimal surfaces in $\E^4$ and  spacelike or timelike surfaces with zero mean curvature vector  in $\E^4_1$.
 We introduce special geometric (canonical) parameters $(u,v)$ on any minimal Lorentz surface of general type.
With respect to these parameters all coefficients of the first and the second fundamental form are expressed by two invariant functions $\mu(u,v)$ and $\nu(u,v)$. Using the canonical parameters we introduce a geometrically determined moving frame field $\{x, y, n_1, n_2\}$ at each point of the surface, where $x$ and $y$ are unit tangent vector fields determining the so-called   canonical directions of the surface; $n_1$ and $n_2$ are  unit normal vector fields which are uniquely determined by the canonical tangents. We prove a fundamental existence and uniqueness theorem (Theorem \ref{T:Fundamental}) stating that any minimal Lorentz surface of general type is determined up to a rigid motion in $\E^4_2$ by the invariants  $\mu(u,v)$ and $\nu(u,v)$ satisfying the following system of natural partial differential equations 
\begin{equation} \label{E:Int-1}
\begin{array}{l}
\vspace{2mm} 
\ds{\sqrt{\left|\mu^2 - \nu^2\right|}}\,\Delta^h \ln \left|\mu^2 - \nu^2\right| = - 4\varepsilon (\mu^2 + \nu^2)\\
\vspace{2mm} 
\ds{\sqrt{\left|\mu^2 - \nu^2\right|}}\,\Delta^h \ln\left|\frac{\mu + \nu}{\mu - \nu}\right| = -4\,\varepsilon\mu\,\nu
\end{array} \qquad \quad (\varepsilon = \pm 1),
\end{equation}
where $\Delta^h = \frac{\partial^2}{\partial u^2} - \frac{\partial^2}{\partial v^2}$ is the hyperbolic Laplace operator, $\varepsilon = 1$ corresponds to the case the geometric normal vector field $n_1$ is spacelike, $\varepsilon = -1$ corresponds to the case  $n_1$ is timelike.

Further, expressing the Gauss curvature $K$ and the normal curvature $\varkappa$ by the invariants $\mu$ and $\nu$, we prove that  $K$ and $\varkappa$  satisfy the following  system of natural
partial differential equations
\begin{equation} \label{E:Int-2}
\begin{array}{l}
\vspace{2mm} 
\ds{\sqrt[4]{K^2 - \varkappa^2}}\,\Delta^h \ln \left(K^2 - \varkappa^2 \right)  = 8 K\\
\vspace{2mm} 
\ds{\sqrt[4]{K^2 - \varkappa^2}}\,\Delta^h \ln \frac{K + \varepsilon\,\varkappa}{K - \varepsilon\,\varkappa}  = 4 \varepsilon \varkappa
\end{array} 
\end{equation}
Conversely, any solution $(K, \varkappa)$  to this system
determines a unique (up to a rigid  motion in $\E^4_2$) minimal Lorentz surface of general type with Gauss curvature $K$ and normal curvature $\varkappa$ and such that the given parameters are canonical. 

The above system is the background system of partial differential equations describing the class of minimal Lorentz surfaces of general type. Equalities  \eqref{E:Int-2} follow also from a result of M. Sakaki \cite{Sa} for Lorentz stationary surfaces in 4-dimensional space forms of index 2. We obtain system  \eqref{E:Int-2}  for the invariants $K$ and $\varkappa$ as a consequence of  system \eqref{E:Int-1} for the invariants $\mu$ and $\nu$. We prefer to describe the class of minimal Lorentz surfaces of general type by the system for the invariants $\mu$ and $\nu$ since the proof of Theorem \ref{T:Fundamental} gives a procedure for constructing  a minimal surface given  a solution $(\mu, \nu)$ to  system \eqref{E:Int-1}. As an application of this procedure, in the last section we obtain an example of a minimal Lorentz surface of general type using a concrete solution $(\mu,\nu)$ to system \eqref{E:Int-1}.

The class of minimal Lorentz surfaces satisfying the equality $K^2 - \varkappa^2 = 0$ is  the analogue of the class of minimal super-conformal surfaces in the Euclidean space $\E^4$. Minimal surfaces from the third class, i.e. satisfying the inequality  $K^2 - \varkappa^2 < 0$ need a different approach to be studied with. Our idea for introducing  a geometric frame field based on the canonical directions cannot be applied for this class of minimal surfaces, since in the case $K^2 - \varkappa^2 < 0$ there do not exist canonical (in our sense) directions. Note that such class of minimal surfaces does not exist neither in the Euclidean space $\E^4$ nor in the Minkowski space $\E^4_1$.

\section{Preliminaries}

Let  $\E^4_2$ be the pseudo-Euclidean 4-space endowed  with the canonical pseudo-Euclidean metric of index 2 given in local coordinates by
$$g_0 = dx_1^2 + dx_2^2 - dx_3^2 - dx_4^2,$$
where $(x_1, x_2, x_3, x_4)$ is a rectangular coordinate system of $\E^4_2$. 
We denote by $\langle .\, , . \rangle$ the indefinite inner scalar product associated with $g_0$. Since $g_0$ is an indefinite metric, a vector 
 $v \in \E^4_2$  can have one of the three casual characters:  \textit{spacelike}, if  $\langle v, v \rangle > 0$ or $v = 0$,   
\textit{timelike} if $\langle v, v \rangle < 0$, and
\textit{lightlike} if  $\langle v, v \rangle = 0$ and $v \neq 0$.
This terminology is inspired by  General Relativity and the Minkowski 4-space $\E^4_1$.

A surface $M^2_1$ in $\E^4_2$ is called  \textit{Lorentz} if the induced  metric $g$ on $M^2_1$ is Lorentzian, i.e. 
at each point $p \in M^2_1$ we have the following decomposition
$$\E^4_2 = T_pM^2_1 \oplus N_pM^2_1$$
with the property that the restriction of the metric
onto the tangent space $T_pM^2_1$ is of
signature $(1,1)$, and the restriction of the metric onto the normal space $N_pM^2_1$ is of signature $(1,1)$.

Denote by $\nabla$ and $\nabla'$ the Levi Civita connections of $M^2_1$  and $\E^4_2$, respectively.
Let $x$ and $y$ be vector fields tangent to $M^2_1$ and $\xi$ be a normal vector field.
The formulas of Gauss and Weingarten are given respectively by
$$\begin{array}{l}
\vspace{2mm}
\nabla'_xy = \nabla_xy + \sigma(x,y);\\
\vspace{2mm}
\nabla'_x \xi = - A_{\xi} x + D_x \xi,
\end{array}$$
where $\sigma$ is the second fundamental form of $M^2_1$, $D$ is the normal
connection on the normal bundle, and $A_{\xi}$ is the shape operator  with respect to
$\xi$. In general, $A_{\xi}$ is not diagonalizable.
The mean curvature vector  field $H$ of   $M^2_1$ in $\E^4_2$ 
is defined as 
$$H = \frac{1}{2}\,  \tr\, \sigma.$$ 
A Lorentz surface in an indefinite space form is called \textit{totally geodesic} if its
second fundamental form vanishes identically. It is called \textit{minimal} if its
mean curvature vector vanishes identically, i.e.  $H=0$.

\section{Minimal Lorentz surfaces whose first normal space is one-dimensional}

Let $M^2_1$ be a  Lorentz surface in $\E^4_2$. According to a result of 
 Larsen \cite{Larsen}, locally there exist isothermal coordinates $(u,v)$ such that  the metric tensor $g$ of $M^2_1$ takes the form
\begin{equation} \label{E:Eq-g}
g= f^2(u, v)(du\otimes du - dv\otimes dv)
\end{equation}
for some positive function $f(u, v)$. Let $z=z(u, v), (u, v) \in \mathcal{D} \, 
 (\mathcal{D} \subset \R^2)$ be a local parametrization on $M^2_1$ with respect to such isothermal parameters.
Then, the coefficients of the first fundamental form are
\begin{equation*}
E = \langle z_u, z_u \rangle = f^2(u, v), \quad F = \langle z_u, z_v \rangle = 0, \quad G = \langle z_v, z_v \rangle = -f^2(u, v).
\end{equation*}
We consider the orthonormal tangent frame field  given by $x=\ds{\frac{z_u}{f}}$, $y=\ds{\frac{z_v}{f}}$. Obviously,
 $\langle x, x \rangle=1$,  $\langle x, y \rangle=0$, $\langle y, y \rangle=-1$. It can easily be checked that the Levi-Civita connection of the metric tensor \eqref{E:Eq-g} satisfies 
\begin{equation} \label{E:Eq-1-a}
\begin{array}{l}
\vspace{2mm}
\nabla_{z_u} z_u= \frac{f_u}{f} \,z_u + \frac{f_v}{f} \, z_v = f_u \,x + f_v \,y;\\
\vspace{2mm}
\nabla_{z_u} z_v= \frac{f_v}{f} \, z_u + \frac{f_u}{f} \, z_v = f_v \,x + f_u \,y;\\
\vspace{2mm}
\nabla_{z_v} z_v= \frac{f_u}{f} \,z_u + \frac{f_v}{f} \,z_v = f_u \,x + f_v \,y.\\
\end{array}
\end{equation}
Then equalities \eqref{E:Eq-1-a} imply the following derivative formulas 
\begin{equation} \label{E:Eq-2-a}
\begin{array}{l}
\vspace{2mm}
\nabla'_xx= \quad\quad\; \frac{f_v}{f^2}\,y + \sigma (x, x);\\
\vspace{2mm}
\nabla'_xy= \frac{f_v}{f^2}\,x \qquad \; + \sigma (x, y);\\
\vspace{2mm}
\nabla'_yx= \quad\quad\; \frac{f_u}{f^2}\,y + \sigma (x, y);\\
\vspace{2mm}
\nabla'_yy= \frac{f_u}{f^2}\,x \qquad \; + \sigma (y, y).
\end{array}
\end{equation}

The mean curvature vector field  of $M^2_1$ is given by $H = \ds{\frac{\sigma(x, x) - \sigma(y, y)}{2}}$, since  $\langle x, x \rangle=1$, $\langle y, y \rangle=-1$. Hence, $M^2_1$ is a minimal surface in $\E^4_2$ if and only if $\sigma(x, x) = \sigma(y, y)$.

\vskip 2mm
The classification of minimal surfaces in an arbitrary indefinite pseudo-Euclidean space $\E^m_s$  is given by B.-Y. Chen in the next theorem.

\begin{thm}\label{T:minimal} \cite{Chen}
A Lorentz surface in a pseudo-Euclidean m-space $\E^m_s$ is minimal if and only if, locally the surface is parametrized by
$$z(u,v) = \alpha(u) + \beta(v),$$
where $\alpha$ and $\beta$ are null curves satisfying $\left\langle \alpha'(u), \beta'(v) \right\rangle \neq 0$.
\end{thm}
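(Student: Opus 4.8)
The plan is to prove Chen's classification (Theorem~\ref{T:minimal}) by exploiting the null coordinate system that is naturally adapted to a Lorentz surface. First I would pass from the isothermal parameters $(u,v)$ of \eqref{E:Eq-g} to null (asymptotic-type) coordinates $\tilde u = u+v$, $\tilde v = u-v$; in these coordinates the induced metric becomes $g = -2f^2\, d\tilde u\, d\tilde v$, so that $\langle z_{\tilde u}, z_{\tilde u}\rangle = \langle z_{\tilde v}, z_{\tilde v}\rangle = 0$ and $\langle z_{\tilde u}, z_{\tilde v}\rangle = -f^2 \neq 0$. Thus $z_{\tilde u}$ and $z_{\tilde v}$ are everywhere lightlike and linearly independent, and their inner product is nonzero — which will become the condition $\langle \alpha'(u), \beta'(v)\rangle \neq 0$ once we identify $\alpha, \beta$.

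Next I would compute the Levi-Civita connection coefficients in these null coordinates. A short calculation (the null-coordinate analogue of \eqref{E:Eq-1-a}) gives $\nabla_{z_{\tilde u}} z_{\tilde u} = (\ln f^2)_{\tilde u}\, z_{\tilde u}$, $\nabla_{z_{\tilde v}} z_{\tilde v} = (\ln f^2)_{\tilde v}\, z_{\tilde v}$, and $\nabla_{z_{\tilde u}} z_{\tilde v} = 0$; the key point is that the mixed Christoffel symbols vanish and the pure ones only involve the corresponding coordinate direction. Consequently the Gauss formula yields
\begin{equation*}
z_{\tilde u \tilde v} = \nabla'_{z_{\tilde u}} z_{\tilde v} = \sigma(z_{\tilde u}, z_{\tilde v}).
\end{equation*}
Now I would invoke the minimality hypothesis: since $H = \tfrac12 \tr \sigma = 0$ and $\tr \sigma$ with respect to the null frame is (up to a nonzero factor) $\sigma(z_{\tilde u}, z_{\tilde v})$, minimality is equivalent to $\sigma(z_{\tilde u}, z_{\tilde v}) = 0$. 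Therefore $z_{\tilde u \tilde v} = 0$ identically.

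The final step is to integrate $z_{\tilde u \tilde v} = 0$: this forces $z(\tilde u, \tilde v) = \alpha(\tilde u) + \beta(\tilde v)$ for vector-valued functions $\alpha, \beta$, which in the original parameters reads $z(u,v) = \alpha(u+v) + \beta(u-v)$ — precisely the asserted form (after relabeling the arguments). The curves $\alpha$ and $\beta$ are null because $\langle \alpha'(\tilde u), \alpha'(\tilde u)\rangle = \langle z_{\tilde u}, z_{\tilde u}\rangle = 0$ and likewise for $\beta$, and $\langle \alpha', \beta'\rangle = \langle z_{\tilde u}, z_{\tilde v}\rangle = -f^2 \neq 0$. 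For the converse, given two null curves $\alpha, \beta$ with $\langle \alpha', \beta'\rangle \neq 0$, the surface $z = \alpha(u) + \beta(v)$ has $z_u = \alpha'$, $z_v = \beta'$ both lightlike with nonzero mixed product, so the induced metric is Lorentzian of the required null form, and $z_{uv} = 0$ gives $\sigma(z_u, z_v) = 0$, hence $H = 0$. I expect the only genuinely delicate point to be the bookkeeping of causal characters and the verification that the induced metric on such an $\alpha(u)+\beta(v)$ is nondegenerate of signature $(1,1)$ — in particular that $z_u, z_v$ stay linearly independent — while the PDE content ($z_{\tilde u\tilde v}=0$) is essentially immediate once the null-coordinate connection is in hand.
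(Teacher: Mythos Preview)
The paper does not prove Theorem~\ref{T:minimal}; it is quoted from Chen \cite{Chen} as a known classification result and used only as background, with no proof supplied. So there is no ``paper's own proof'' to compare against.

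That said, your argument is the standard one and is correct in outline: passing to null coordinates on a Lorentz surface turns the minimal-surface condition into the wave equation $z_{\tilde u\tilde v}=0$, whose general solution is a sum $\alpha(\tilde u)+\beta(\tilde v)$ of null curves with $\langle\alpha',\beta'\rangle\neq 0$. One small computational slip: with $\tilde u=u+v$, $\tilde v=u-v$ and $g=f^2(du^2-dv^2)$ one has $z_{\tilde u}=\tfrac12(z_u+z_v)$, $z_{\tilde v}=\tfrac12(z_u-z_v)$, hence $\langle z_{\tilde u},z_{\tilde v}\rangle=\tfrac12 f^2>0$, not $-f^2$; this does not affect the argument since only nonvanishing of the mixed product is used. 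The remaining steps --- the vanishing of $\nabla_{z_{\tilde u}} z_{\tilde v}$ in null coordinates, the equivalence of $H=0$ with $\sigma(z_{\tilde u},z_{\tilde v})=0$ via $H=g^{\tilde u\tilde v}\sigma(z_{\tilde u},z_{\tilde v})$, the integration of $z_{\tilde u\tilde v}=0$, and the converse direction --- are all sound.
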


\vskip 2mm
We study minimal Lorentz surfaces in $\E^4_2$ in terms of geometric (canonical) parameters and geometric frame field which allow us to determine each minimal  surface of  general type by two  smooth functions satisfying a system of two natural partial differential equations.

\vskip 2mm
Let $M^2_1$ be a minimal surface parametrized by isothermal parameters such that the metric tensor $g$ is given by \eqref{E:Eq-g} and hence  formulas \eqref{E:Eq-2-a} hold true. Since $M^2_1$ is minimal, we have $\sigma(x, x) = \sigma(y, y)$. 

At a given point $p \in M^2_1$, the \textit{first normal space} of $M^2_1$  in $\E^4_2$, denoted by
$\rm{Im} \, \sigma_p$, is the subspace given by
$${\rm Im} \, \sigma_p = {\rm span} \{\sigma(X, Y): X, Y \in T_p M^2_1 \}.$$

In this section we consider minimal Lorentz surfaces for which the first normal space  at each point is one-dimensional. 
A point $p$ of such surface is called \textit{degenerate} if the Gauss curvature $K$ and the curvature of
the normal connection $\varkappa$  (the normal curvature) are both zero at $p$. According to a result in \cite{BaPaBringas}, a minimal Lorentz surface consisting of degenerate points either belongs to a degenerate hyperplane or is a flat umbilic or quasi-umbilic surface. 

In the next theorem we describe the minimal Lorentz surfaces  whose first normal space is one-dimensional.

\begin{thm}
Let $M^2_1$ be a minimal Lorentz surface in $\E^4_2$ for which the first normal space  at each point is one-dimensional. Then in a neighbourhood of a non-degenerate point  $M^2_1$ is  a non-flat surface lying in a non-degenerate hyperplane of $\E^4_2$.
\end{thm}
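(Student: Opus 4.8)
The plan is to work with the minimal Lorentz surface $M^2_1$ parametrized by isothermal parameters so that \eqref{E:Eq-g} holds, to introduce a unit normal vector field $n_1$ spanning the (one-dimensional) first normal space, and to exploit the fact that $\sigma(x,x)=\sigma(y,y)$ together with the codimension-two normal bundle. Since $\mathrm{Im}\,\sigma_p$ is one-dimensional, write $\sigma(x,x)=\sigma(y,y)=a\,n_1$ and $\sigma(x,y)=b\,n_1$ for smooth functions $a,b$ and $n_1$ a unit normal field (spacelike or timelike, giving a sign $\varepsilon_1=\langle n_1,n_1\rangle=\pm1$). Let $n_2$ be a unit normal field completing $\{x,y,n_1,n_2\}$ to an orthonormal frame; then $A_{n_2}=0$, so the second fundamental form with respect to $n_2$ vanishes identically. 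The first step is to write out the full Gauss–Weingarten system in this frame, in particular the derivative formulas for $n_1$ and $n_2$: since $A_{n_2}=0$ we get $\nabla'_x n_2 = D_x n_2$ and $\nabla'_y n_2 = D_y n_2$, i.e. $n_2$ moves only inside the normal bundle.

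The second step is to analyze the normal connection coefficient $D_x n_1$ and $D_y n_1$ (equivalently the single $1$-form $\langle D n_1, n_2\rangle$) using the Codazzi equations. I would compute the Codazzi equations for the pair $(n_1,n_2)$: on one side they involve derivatives of $a$ and $b$, on the other the normal connection coefficients times $a,b$. The key point is that non-degeneracy ($K^2+\varkappa^2\neq 0$, or here $K$ or $\varkappa$ nonzero) forces $(a,b)\neq(0,0)$ in a neighbourhood, and the Gauss/Ricci equations express $K$ in terms of $a,b,f$ and $\varkappa$ in terms of the normal curvature, which for a one-dimensional first normal space must vanish — indeed $\varkappa$ measures the failure of the normal connection restricted to $\mathrm{Im}\,\sigma$, and a short computation via the Ricci equation should give $\varkappa=0$ on the non-degenerate set. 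Hence at a non-degenerate point $K\neq 0$, so the surface is non-flat there; that disposes of the "non-flat" claim.

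The third step, and the main obstacle, is to show $M^2_1$ lies in a non-degenerate hyperplane. The natural candidate is the constant vector field obtained from $n_2$: I would show that $D_x n_2 = D_y n_2 = 0$, so that $n_2$ is a constant vector in $\E^4_2$, whence $\langle z(u,v), n_2\rangle$ is constant and $M^2_1$ lies in the affine hyperplane orthogonal to $n_2$; since $n_2$ is a unit (spacelike or timelike) vector, this hyperplane is non-degenerate. To prove $D n_2 = 0$ one uses the Codazzi equation for $n_2$ together with $A_{n_2}=0$: the Codazzi equation reads $(\nabla'_X \sigma)(Y,W)$ has vanishing $n_2$-component, which after expanding gives $\langle D_X n_1, n_2\rangle \cdot (\text{components of } \sigma(Y,W) \text{ along } n_1) = 0$ in an appropriate antisymmetrized form; combined with $(a,b)\neq(0,0)$ this forces the normal connection $1$-form $\langle D n_1, n_2\rangle$ to vanish, and hence $D n_2 = 0$ as well. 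The delicate part is handling the points where $(a,b)$ could degenerate and checking that "non-degenerate point" in the sense of the theorem (not both $K$ and $\varkappa$ zero) really does guarantee $(a,b)\neq(0,0)$ on a whole neighbourhood; I would argue this by the explicit formulas $K = \varepsilon_1\,\dfrac{b^2-a^2}{f^4}\cdot(\pm1)$-type expression coming from the Gauss equation, so that $K=0$ and $\varkappa=0$ would force $a=b=0$. Once $n_2$ is constant, the inclusion in the non-degenerate hyperplane is immediate, and the fact that inside that hyperplane the surface is non-flat follows from $K\neq 0$.
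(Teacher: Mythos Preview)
Your overall strategy is sound and actually a bit cleaner than the paper's: you normalise the first normal direction to a unit $n_1$, complete to an orthonormal normal frame $\{n_1,n_2\}$ with $A_{n_2}=0$, read off $\varkappa=0$ from the Ricci equation (since $[A_{n_1},A_{n_2}]=0$), and then use Codazzi to kill the normal connection and make $n_2$ parallel. The paper instead keeps a non-normalised $n=a\,e_1+b\,e_2$ in a fixed orthonormal normal frame $\{e_1,e_2\}$, writes out the full system of integrability conditions from $R'=0$, solves explicitly for the normal connection coefficients $\beta_1,\beta_2$, and then checks by hand that the vector $\bar n=\dfrac{b}{\sqrt{|a^2-b^2|}}\,e_1+\dfrac{a}{\sqrt{|a^2-b^2|}}\,e_2$ is $\nabla'$-parallel. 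Your route avoids the explicit solution for $\beta_1,\beta_2$ and the direct computation of $\nabla'\bar n$; the paper's route avoids having to choose a unit vector in $\mathrm{Im}\,\sigma$ at the outset.

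There is, however, a genuine logical gap in your write-up. You begin by taking $n_1$ to be a \emph{unit} vector spanning $\mathrm{Im}\,\sigma$, i.e.\ you assume $\mathrm{Im}\,\sigma$ is non-lightlike. In $\E^4_2$ this is not automatic: a one-dimensional normal line can be null. You must argue \emph{before} normalising that at a non-degenerate point $\mathrm{Im}\,\sigma$ is non-null. This follows easily, but not from your formula $K=\varepsilon_1(b^2-a^2)$, which already presupposes a unit $n_1$; rather, write $\sigma(X,Y)=s(X,Y)\,n$ for some (possibly null) $n$ and observe directly from the Gauss equation that $K$ is a multiple of $\langle n,n\rangle$, while the Ricci equation (all $A_\xi$ are scalar multiples of a single operator, so they commute) gives $\varkappa=0$ regardless. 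Hence a null $\mathrm{Im}\,\sigma$ forces the point to be degenerate, and your normalisation is legitimate on the non-degenerate set. A second, smaller slip: the $n_2$-components of Codazzi yield $a\beta_1=b\beta_2$ and $b\beta_1=a\beta_2$, so one needs $a^2\neq b^2$ (equivalently $K\neq 0$), not merely $(a,b)\neq(0,0)$, to conclude $\beta_1=\beta_2=0$; you state the weaker hypothesis. Since $\varkappa=0$ and the point is non-degenerate, $K\neq 0$ and hence $a^2\neq b^2$ indeed holds, so the argument is rescued --- but the condition you actually invoke must be $a^2\neq b^2$.
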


\begin{proof}
Since the first normal space is one-dimensional, at least locally there exist  a normal vector field $n$ and smooth functions $\nu$, $\mu$ such that 
\begin{equation*}
\begin{array} {l}
\vspace{2mm}
\sigma(x, x) = \sigma(y, y) = \nu \,n;\\
\vspace{2mm}
\sigma(x, y) = \mu \,n.
\end{array}
\end{equation*}

Note that if $\nu = \mu =0$ then the surface $M^2_1$ is totally geodesic, and hence $M^2_1$ is  contained in a two-dimensional plane $\E^2_1$. 
So, we assume that at least one of the functions  $\nu$, $\mu$ is non-zero.

Let $\{e_1, e_2\}$ be a local normal frame field of $M^2_1$, defined in $\mathcal{D}$, such that $\langle e_1, e_1 \rangle = 1$, $\langle e_2, e_2 \rangle = -1$, $\langle e_1, e_2 \rangle = 0$ and $n = a\, e_1 + b\, e_2$ for some smooth functions $a$ and $b$, where $a^2 +b^2 \neq 0$. Then 
 with  respect to the  frame field $\{x, y, e_1, e_2\}$ we have the following Frenet-type derivative formulas
\begin{equation} \label{E: Frenet1}
\begin{array} {ll}
\vspace{2mm}
\nabla'_{x}x=\quad\quad\;-\gamma_{1}\,y+\mu a\,e_1 + \mu b\,e_2; & \qquad\quad\nabla'_{x}e_1=-\mu a\,x+\nu a\,y\quad\,\quad\quad+\beta_{1}\,e_2;\\
\vspace{2mm}
\nabla'_{x}y=-\gamma_{1}\,x\quad\quad\;+\nu a\,e_1 + \nu b\,e_2; & \qquad\quad\nabla'_{y}e_1=-\nu a\,x+\mu a\,y\quad\,\quad\quad+\beta_{2}\,e_2;\\
\vspace{2mm}
\nabla'_{y}x=\quad\quad\;-\gamma_{2}\,y+\nu a\,e_1 + \nu b\,e_2; & \qquad\quad\nabla'_{x}e_2=\mu b\,x -\nu b\,y + \beta_{1}\,e_1;\\
\vspace{2mm}
\nabla'_{y}y=-\gamma_{2}\,x\quad\quad\;+\mu a\,e_1 + \mu b\,e_2; & \qquad\quad\nabla'_{y}e_2=\nu b\,x -\mu b\,y + \beta_{2}\,e_1,
\end{array}
\end{equation}
where $\gamma_1 = - \frac{f_v}{f^2}$, $\gamma_2 = - \frac{f_u}{f^2}$, $\beta_1$ and $\beta_2$ are  smooth functions determining the components of the normal connection.

Since the Levi-Civita connection $\nabla'$ is flat, from $R'(x,y,x) = 0, R'(x,y,y) = 0, R'(x,y,e_1) = 0$, using \eqref{E: Frenet1} we obtain that the functions 
$\gamma_1$,  $\gamma_2$, $\nu$, $\mu$, $\beta_1$, $\beta_2$, $a$, $b$  satisfy  the following conditions 
\begin{equation} \label{E:Int-Cond}
\begin{array} {l}
\vspace{2mm} x(\nu a) - y(\mu a) = 2\nu a\gamma_2 - 2\mu a\gamma_1 -  \nu b\beta_1 + \mu b\beta_2;\\
\vspace{2mm} x(\nu b) - y(\mu b) = 2\nu b\gamma_2 - 2\mu b\gamma_1 -  \nu a\beta_1 + \mu a\beta_2;\\
\vspace{2mm} x(\mu a) - y(\nu a) = 2\mu a\gamma_2 - 2\nu a\gamma_1 -  \mu b\beta_1 + \nu b\beta_2;\\
\vspace{2mm} x(\mu b) - y(\nu b) = 2\mu b\gamma_2 - 2\nu b\gamma_1 -  \mu a\beta_1 + \nu a\beta_2;\\
\vspace{2mm} x(\beta_2) - y(\beta_1) + \gamma_1\beta_1 - \gamma_2\beta_2 = 0.
\end{array}
\end{equation}

The Gauss curvature $K$ and the normal curvature $\varkappa$ are given by the following formulas
\begin{equation*}
\begin{array} {l}
\vspace{6mm} K = \ds{\frac{\langle \sigma(x, x), \sigma(y, y) \rangle - \langle \sigma(x, y), \sigma(x, y) \rangle}{\langle x, x \rangle\,\langle y, y \rangle -  \langle x, y \rangle^2}};\\
\vspace{2mm} \varkappa = \ds{\frac{\langle R^{\bot}(x, y)\,n_1, n_2 \rangle}{\langle x, x \rangle\,\langle y, y \rangle -  \langle x, y \rangle^2}},
\end{array}
\end{equation*}
where 
$R^{\bot}(x, y)\,n_1  = D_x D_y\, n_1 - D_y D_x\, n_1 - D_{[x, y]}\,n_1$.
So, using \eqref{E: Frenet1} we obtain the following expressions for the Gauss curvature and the normal curvature of the surface
\begin{equation*} 
 K = - (\mu^2 - \nu^2) (a^2 - b^2);
\end{equation*}
\begin{equation} \label{E:Eq-kappa}
 \varkappa = x(\beta_2) - y(\beta_1) + \gamma_1\beta_1 - \gamma_2\beta_2.
\end{equation}

The expression for the Gauss curvature can be established also from the choice of the parameters and $e_1, e_2$.
Note that  \eqref{E:Eq-kappa} and the last equality of \eqref{E:Int-Cond}  imply  $\varkappa = 0$, i.e. the surface has flat normal connection. Hence, points of the surface at which $\mu^2 - \nu^2 = 0$ or $a^2 - b^2 = 0$  are degenerate.

In a neighbourhood $\mathcal{D}_0 \subset \mathcal{D}$ of a non-degenerate point we have $\mu^2 - \nu^2 \neq 0$ and $a^2 - b^2 \neq 0$. From the first four equalities of \eqref{E:Int-Cond} we obtain 
\begin{equation*} 
\begin{array} {l}
\vspace{2mm}
(\mu^2 - \nu^2) (b x(a) - a x(b)) = \beta_1 (\mu^2 - \nu^2)(a^2 - b^2);\\
\vspace{2mm}
(\mu^2 - \nu^2) (b y(a) - a y(b)) = \beta_2 (\mu^2 - \nu^2)(a^2 - b^2).
\end{array}
\end{equation*}
The last two equalities imply
\begin{equation} \label{E:Eq-beta}
\beta_1 = \frac{b x(a) - a x(b)}{a^2 - b^2};\qquad 
\beta_2 = \frac{b y(a) - a y(b)}{a^2 - b^2}.
\end{equation}
Let us consider the normal vector field $\bar{n}$ defined by
\begin{equation*} 
\bar{n} = \frac{b}{\sqrt{|a^2 - b^2|}}\; e_1 + \frac{a}{\sqrt{|a^2 - b^2|}}\; e_2.
\end{equation*}
Since $a^2 - b^2 \neq 0$, the vector fields $\bar{n}$  and $n$ are   non-lightlike in $\mathcal{D}_0$. Obviously, $\bar{n}$ is orthogonal to $n$.
Using \eqref{E: Frenet1} and \eqref{E:Eq-beta} we calculate that $\nabla'_{x}\bar{n} = 0$, $\nabla'_{y}\bar{n} = 0$,  i.e. the normal vector field $\bar{n}$ is constant.
Hence,  the surface $M^2_1{/_{\mathcal{D}_0}}$ lies in a constant 3-dimensional space parallel to ${\rm span} \{x, y, n\}$. Moreover, since $K \neq 0$, the surface is non-flat. Consequently, $M^2_1{/_{\mathcal{D}_0}}$ is a non-flat surface lying  in a hyperplane $\E^3_1$ or $\E^3_2$ of $\E^4_2$. 

\end{proof}

\section{Minimal Lorentz surfaces of general type}

In this section we study minimal Lorentz surfaces for which the first normal space is two-dimensional at each point. 

Let $M^2_1$ be a minimal surface parametrized by isothermal parameters such that the metric tensor $g$ is given by \eqref{E:Eq-g}. 
We choose a local  normal frame field  $\{e_1, e_2\}$, defined in $\mathcal{D}$, such that  $\langle e_1, e_1 \rangle = 1$, $\langle e_1, e_2 \rangle = 0$,  $\langle e_2, e_2 \rangle = -1$. Then the components of the second fundamental form are expressed as follows
\begin {equation*} 
\begin {array} {l}
\vspace{2mm}
\sigma(x, x) =  a \, e_1 + b \, e_2;\\
\vspace{2mm}
\sigma(x, y) = c\, e_1 + d \,e_2;\\
\vspace{2mm}
\sigma(y, y) = a \, e_1 + b \, e_2
\end {array}
\end {equation*}
for some smooth functions  $a(u,v)$, $b(u,v)$, $c(u,v)$, and  $d(u,v)$. 

Let us note that $a\,d - b\,c \neq 0$, $a^2 + b^2 \neq 0$, $c^2 + d^2 \neq 0$, since the first normal space is two-dimensional.

With  respect to the  frame field $\{x, y, e_1, e_2\}$ we have the following  derivative formulas
\begin{equation} \label{E:Eq-1}
\begin{array} {ll}
\vspace{2mm}
\nabla'_{x}x=\quad\quad\;-\gamma_{1}\,y+ a\,e_1 + b\,e_2; & \qquad\quad\nabla'_{x}e_1=-a\,x+c\,y\quad \quad\quad+\beta_{1}\,e_2;\\
\vspace{2mm}
\nabla'_{x}y=-\gamma_{1}\,x\quad\quad\;+c\,e_1 + d\,e_2; & \qquad\quad\nabla'_{y}e_1=-c\,x+a\,y\quad \quad\quad+\beta_{2}\,e_2;\\
\vspace{2mm}
\nabla'_{y}x=\quad\quad\;-\gamma_{2}\,y+c\,e_1 + d\,e_2; & \qquad\quad\nabla'_{x}e_2=b\,x-d\,y\quad  +\beta_{1}\,e_1;\\
\vspace{2mm}
\nabla'_{y}y=-\gamma_{2}\,x\quad\quad\;+a\,e_1 + b\,e_2; & \qquad\quad\nabla'_{y}e_2=d\,x-b\,y \quad +\beta_{2}\,e_1.
\end{array}
\end{equation}

It follows from equalities \eqref{E:Eq-1} that the Gauss curvature $K$ is expressed as
$$K = b^2 - a^2 + c^2 - d^2.$$
Using the equation of Ricci and formulas  \eqref{E:Eq-1}  we get that the normal curvature is 
$$\varkappa = 2(bc-ad).$$
Consequently, 
\begin{equation} \label{E:Eq-2}
K^2 - \varkappa^2 = (a^2 - b^2)^2 + (c^2 - d^2)^2 - 2(bc-ad)^2 - 2(ac-bd)^2.
\end{equation}

We shall consider minimal surfaces whose first normal space is two-dimensional and whose Gauss curvature $K$ and normal curvature $\varkappa$ satisfy the following inequality
$$K^2 - \varkappa^2  > 0$$
on a dense open subset of $M^2_1$.
Such minimal surfaces we call \textit{minimal surfaces of general type}. For this class of surfaces 
 we shall introduce a  local orthonormal  frame field 
$\{x, y, n_1, n_2\}$  such that the vector fields $\sigma(x, x)$ and $\sigma(x, y)$ are collinear to $n_1$ and $n_2$, respectively, i.e.
\begin {equation*}
\begin {array} {l}
\vspace{2mm}\sigma(x, x) = \nu\,n_1;\\
\vspace{2mm}\sigma(x, y) = \mu\,n_2
\end {array}
\end {equation*}
for some smooth functions $\nu(u,v) \neq 0$ and $\mu(u,v) \neq 0$.

\vskip 3mm
Using that $R'(x,y,x) = 0$, $R'(x,y,y) = 0$, $R'(x,y,e_1) = 0$, from \eqref{E:Eq-1} we obtain that the functions 
$\gamma_1$,  $\gamma_2$, $\beta_1$,  $\beta_2$, $a$, $b$, $c$, $d$ satisfy the following conditions 
\begin{equation} \label{E:Int-Cond-1}
\begin{array} {l}
\vspace{2mm} x(c) - y(a) = 2c\gamma_2 - 2a\gamma_1 -  d\beta_1 + b \beta_2;\\
\vspace{2mm} x(d) - y(b) = 2d\gamma_2 - 2b\gamma_1 -  c\beta_1 + a\beta_2;\\
\vspace{2mm} x(a) - y(c) = 2a\gamma_2 - 2c\gamma_1 -  b\beta_1 + d\beta_2;\\
\vspace{2mm} x(b) - y(d) = 2b\gamma_2 - 2d\gamma_1 -   a\beta_1 + c\beta_2;\\
\vspace{2mm} x(\beta_2) - y(\beta_1) + \gamma_1\beta_1 - \gamma_2\beta_2 = 2(bc - ad).
\end{array}
\end{equation}

If we suppose that $a=b$ and $c=d$ (or $a=-b$ and $c=-d$), we get a contradiction with the assumption $ad - bc \neq 0$.
If we suppose that $a=b$ and $c=-d$ (similarly for $a=-b$ and $c=d$), then from the first four equalities of \eqref{E:Int-Cond-1} we obtain 
$$\beta_1 = x (\ln |c|) - 2 \gamma_2; \quad \beta_2 = y (\ln |c|) - 2 \gamma_1.$$
On the other hand, $\gamma_1 = -y(\ln f), \gamma_2 = -x(\ln f)$. A direct calculation shows that $x(\beta_2) - y(\beta_1) + \gamma_1\beta_1 - \gamma_2\beta_2 = 0$. 
So, the last equality of \eqref{E:Int-Cond-1}  implies that $ac = 0$, which contradicts the 
assumption  that the first normal space is two-dimensional. 

Now, suppose that $a^2 - b^2 \neq 0$ and $c =  d$ (or $c = -d$). Then the first four equalities of \eqref{E:Int-Cond-1} imply
$$\beta_1 = x (\ln |a-b|) - 2 \gamma_2; \quad \beta_2 = y (\ln |a-b|) - 2 \gamma_1; \quad \text{in case} \; c =  d,$$
or 
$$\beta_1 = -x (\ln |a+b|) + 2 \gamma_2; \quad \beta_2 = -y (\ln |a+b|) + 2 \gamma_1;\quad \text{in  case} \; c =  -d.$$
The last expressions of $\beta_1$ and $\beta_2$ together with $\gamma_1 = -y(\ln f), \gamma_2 = -x(\ln f)$ imply that $x(\beta_2) - y(\beta_1) + \gamma_1\beta_1 - \gamma_2\beta_2 = 0$. Then from the last equality of \eqref{E:Int-Cond-1} we obtain $c(a-b) = 0$ (or $c(a+b)=0$), which contradicts the assumption  that the first normal space is two-dimensional. Similarly, in the case $a^2 = b^2$ and $c^2 - d^2 \neq 0$ we also get a contradiction.

Hence, $a^2 - b^2 \neq 0$ and $c^2 - d^2 \neq 0$ for each $(u,v) \in \mathcal{D}$, i.e.  $\sigma(x, x)$ and $\sigma(x, y)$ are both non-lightlike vector fields.
If $ac - bd = 0$ for each $(u,v) \in \mathcal{D}$, then $\langle \sigma(x,x), \sigma(x,y) \rangle = 0$ and hence  $\sigma(x, x)$ and $\sigma(x, y)$ are orthogonal. If $ac - bd \neq 0$ at a point $p =z(u_0, v_0)$ of $M^2_1$, then there exists a subdomain  $\mathcal{D}_0 \subset \mathcal{D}$, $(u_0, v_0) \in \mathcal{D}_0$,  such that  $ac - bd \neq 0$ in $\mathcal{D}_0 $. In this case, we consider an orthonormal tangent frame field $\bar{x}, \bar{y}$, defined in $\mathcal{D}_0$, such that 
\begin {equation*}
\begin {array} {l}
\vspace{2mm}\bar{x} = \cosh\varphi\,x + \sinh\varphi\,y;\\
\vspace{2mm}\bar{y} = \sinh\varphi\,x + \cosh\varphi\,y
\end {array}
\end {equation*}
for some smooth function  $\varphi$. Then 
\begin {equation*}
\begin {array} {l}
\vspace{2mm}\sigma(\bar{x}, \bar{x}) = \bar{a}\,e_1 + \bar{b}\,e_2;\\
\vspace{2mm}\sigma(\bar{x}, \bar{y}) = \bar{c}\,e_1 + \bar{d}\,e_2,
\end {array}
\end {equation*}
where the functions $\bar{a}, \bar{b}, \bar{c}, \bar{d}$ are expressed as follows
\begin {equation*}
\begin {array} {ll}
\vspace{2mm}\bar{a} = a\,\cosh2\varphi + c\,\sinh2\varphi; & \qquad\quad \bar{c} = a\,\sinh2\varphi + c\,\cosh2\varphi;\\
\vspace{2mm}\bar{b} = b\,\cosh2\varphi + d\,\sinh2\varphi; & \qquad\quad \bar{d} = b\,\sinh2\varphi + d\,\cosh2\varphi.
\end{array}
\end {equation*}
Straightforward computations show that 
$$(\bar{a}\bar{c} - \bar{b}\bar{d}) = \ds{\frac{a^2 + c^2 - b^2 - d^2}{2}}\,\sinh4\varphi +  (ac - bd)\,\cosh4\varphi.$$
If we suppose that $b^2 - a^2 + d^2 -c^2 = 0$, then $a^2 +c^2 = b^2 + d^2$ and hence there exist functions $p$, $q$, $r$, such that  
$a = r \cos p$, $c = r \sin p$, $b = r \cos q$, $d = r \sin q$. Then, using \eqref{E:Eq-2} we get 
$$K^2  - \varkappa^2 = - 4(ac -bd)^2,$$
which contradicts the condition $K^2  - \varkappa^2 > 0$.
Hence, we can assume that $b^2 - a^2 + d^2 -c^2 \neq 0$. Then, $\langle \sigma(\bar{x}, \bar{x}), \sigma(\bar{x}, \bar{y}) \rangle = 0$ if and only if there exists a function $\varphi$ such that 
 $$\tanh 4 \varphi = \ds{\frac{2(ac - bd)}{b^2 - a^2 + d^2 -c^2}}.$$
Denote $A = \ds{\frac{2(ac - bd)}{b^2 - a^2 + d^2 -c^2}}$. 
Then 
$$A -1 = \ds{\frac{(a+c)^2 - (b+d)^2}{b^2 - a^2 + d^2 -c^2}}; \qquad A +1 = \ds{\frac{-(a-c)^2 + (b-d)^2}{b^2 - a^2 + d^2 -c^2}},$$
 and after some computations we obtain 
$$A^2-1 = - \ds{\frac{(a^2 - b^2)^2 + (c^2 - d^2)^2 - 2(bc-ad)^2 - 2(ac-bd)^2}{(b^2 - a^2 + d^2 -c^2)^2}}.$$
Having in mind \eqref{E:Eq-2}, we get
$$A^2-1 = \ds{\frac{ - (K^2 - \varkappa^2)}{(b^2 - a^2 + d^2 -c^2)^2}}.$$
Since we study surfaces satisfying $K^2 - \varkappa^2  > 0$, we get that $-1 <A< 1$, $A \neq 0$ for  $(u, v) \in {\mathcal D}_0$.
So, there exists  a function $\varphi(u,v)$ defined in ${\mathcal D}_0$ such that $A = \tanh 4 \varphi$. Hence, $\langle \sigma(\bar{x}, \bar{x}), \sigma(\bar{x}, \bar{y}) \rangle = 0$, i.e. $\sigma(\bar{x}, \bar{x})$ and $\sigma(\bar{x}, \bar{y})$ are orthogonal. 
Consequently, there exists an orthonormal  normal frame field $\{n_1, n_2\}$ such that 
\begin {equation*}
\sigma(\bar{x}, \bar{x}) = \nu\,n_1;\qquad
\sigma(\bar{x}, \bar{y}) = \mu\,n_2
\end {equation*}
for some functions $\nu \neq 0$ and $\mu \neq 0$.

\vskip 2mm
Finally, we obtain that in a neighbourhood of any point of a  minimal Lorentz surface of general type 
we can introduce  a special  orthonormal frame field $\{x, y, n_1, n_2\}$
such that
\begin{equation*}\label{Eq-3}
\begin{array}{l}
\vspace{2mm}
\sigma(x,x) = \nu \,n_1; \\
\vspace{2mm}
\sigma(x,y) = \qquad \mu\,n_2;  \\
\vspace{2mm}
 \sigma(y,y) = \nu \,n_1,
\end{array}
\end{equation*}
where $\nu\mu \neq 0$ and $\langle x, x \rangle = 1$, $\langle y, y  \rangle= -1$, $\langle n_1, n_1 \rangle = \varepsilon$, $\langle n_2, n_2 \rangle = -\varepsilon$ ($\varepsilon = \pm 1$). We call this orthonormal frame field
 a \emph{geometric frame field} of the surface and the tangent directions determined by the tangent vector fields $x$ and $y$ 
we call \textit{canonical directions} of the surface. Obviously, the canonical directions are uniquely determined at any point of a
minimal Lorentz surface of general type. The normal frame field $\{n_1; n_2\}$ 
is also uniquely determined by the canonical tangents, and the functions $\mu$ and $\nu$ are geometric
functions  of the surface.

With respect to the geometric  frame field  $\{x, y, n_1, n_2\}$ the following Frenet-type derivative formulas hold true
\begin {equation} \label{E:Frenet-general}
\begin {array} {ll}
\vspace{2mm}\nabla'_{x}x=\quad\quad\;-\gamma_{1}\,y+\nu\,n_1; & \qquad\quad\nabla'_{x}n_1=-\varepsilon\nu\;x\qquad\quad\quad\quad+\beta_{1}\,n_2;\\
\vspace{2mm}\nabla'_{x}y=-\gamma_{1}\,x\quad\quad\quad\quad \;\;+\mu\,n_2; & \qquad\quad\nabla'_{y}n_1=\quad\quad\quad\varepsilon\nu\;y \qquad\quad+\beta_{2}\,n_2;\\
\vspace{2mm}\nabla'_{y}x=\quad\quad\;-\gamma_{2}\,y\quad\quad \;+\mu\,n_2; & \qquad\quad\nabla'_{x}n_2=\quad\quad-\varepsilon\mu\;y + \beta_{1}\,n_1;\\
\vspace{2mm}\nabla'_{y}y=-\gamma_{2}\,x\quad\quad\;+\nu\,n_1; & \qquad\quad\nabla'_{y}n_2=\varepsilon\mu\,x \quad\quad\quad+ \beta_{2}\,n_1.\end{array}
\end {equation}
Using that the Levi-Civita connection $\nabla'$ is flat, from \eqref{E:Frenet-general} we obtain that the functions 
$\gamma_1$,  $\gamma_2$, $\nu$, $\mu$, $\beta_1$, and $\beta_2$ satisfy  the following conditions 
\begin{equation} \label{E: Integr.cond.}
\begin{array} {l}
\vspace{2mm} 
x(\gamma_2) - y(\gamma_1) + \gamma_1^2 - \gamma_2^2 = -\varepsilon\,(\mu^2 + \nu^2);\\
\vspace{2mm} 
x(\beta_2) - y(\beta_1) + \gamma_1\,\beta_1 - \gamma_2\,\beta_2 = -2\varepsilon\,\mu\,\nu;\\
\vspace{2mm} 
x(\mu) = 2\mu\,\gamma_2 + \nu\,\beta_2;\\
\vspace{2mm} 
y(\mu) = 2\mu\,\gamma_1 + \nu\,\beta_1;\\
\vspace{2mm} 
x(\nu) = 2\nu\,\gamma_2 + \mu\,\beta_2;\\
\vspace{2mm} 
y(\nu) = 2\nu\,\gamma_1 + \mu\,\beta_1.
\end{array}
\end{equation}

The Gauss curvature $K$ and the normal curvature $\varkappa$ are expressed  in terms of the invariants $\mu$ and $\nu$ as follows
\begin{equation} \label{E:Kandkappa}
\begin{array} {l}
\vspace{2mm} K = -\varepsilon\,(\mu^2 + \nu^2);\\
\vspace{2mm} \varkappa = -2\,\mu\,\nu.
\end{array}
\end{equation}

Since $\nu\mu \neq 0$, we can formulate the following statement.

\begin{prop}\label{P:non-zero K}
Let $M^2_1$ be a Lorentz minimal surface of general type. Then at each point of the surface the Gauss curvature $K$ and 
the normal curvature $\varkappa$ are non-zero.
\end{prop}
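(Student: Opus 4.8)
The plan is to read the statement straight off the expressions \eqref{E:Kandkappa} for the Gauss curvature and the normal curvature in terms of the geometric invariants, combined with the nonvanishing of $\mu$ and $\nu$ that was established during the construction of the geometric frame field. So the proof will not require any new computation; it is a direct consequence of what precedes.

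First I would recall that on a minimal Lorentz surface of general type the geometric frame field $\{x,y,n_1,n_2\}$ exists in a neighbourhood of every point, and that by its very construction the invariant functions satisfy $\mu(u,v)\neq 0$ and $\nu(u,v)\neq 0$ at each point of the surface; hence $\mu^2+\nu^2>0$ everywhere. Then, since $\varepsilon=\pm 1$, the first equality in \eqref{E:Kandkappa} gives $|K|=\mu^2+\nu^2>0$, so $K\neq 0$ at every point. For the normal curvature, the second equality in \eqref{E:Kandkappa} exhibits $\varkappa=-2\mu\nu$ as $-2$ times a product of two nowhere-vanishing functions, whence $\varkappa\neq 0$ at every point as well. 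This proves the proposition.

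The only point that deserves care—and it is not really an obstacle, since it is precisely what the preceding discussion delivers—is that the nonvanishing of $\mu$ and $\nu$ holds at \emph{every} point of the surface, not merely on a dense open subset. This is ensured by the fact, proved above, that $a^2-b^2\neq 0$ and $c^2-d^2\neq 0$ hold for each $(u,v)\in\mathcal{D}$, so that $\sigma(x,x)$ and $\sigma(x,y)$ are everywhere non-lightlike and nonzero, and the orthonormal normal frame $\{n_1,n_2\}$ with $\sigma(\bar x,\bar x)=\nu\,n_1$, $\sigma(\bar x,\bar y)=\mu\,n_2$, $\nu\neq 0$, $\mu\neq 0$, exists on all of the relevant domain. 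Consequently both $K$ and $\varkappa$ are nowhere zero, which is the assertion of Proposition \ref{P:non-zero K}.
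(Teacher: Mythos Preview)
Your proposal is correct and follows exactly the paper's approach: the paper itself proves the proposition in one line (``Since $\nu\mu \neq 0$, we can formulate the following statement'') by reading $K$ and $\varkappa$ off the formulas \eqref{E:Kandkappa}, and you have simply spelled this out with an additional remark about why the nonvanishing of $\mu$ and $\nu$ holds at every point rather than only on a dense open subset.
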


It follows from \eqref{E:Kandkappa} that 
$$K^2 - \varkappa^2 = (\mu ^2 - \nu^2)^2.$$

\vskip 3mm
\noindent
\textbf{\textit{Remark}.} 
Minimal Lorentz surfaces satisfying $K^2 - \varkappa^2 = 0$ (or equivalently $\mu ^2 = \nu^2$)  are the analogue of  minimal super-conformal surfaces in the Euclidean space $\E^4$. 
Instead of ellipse of normal curvature  defined for surfaces in $\E^4$, in the pseudo-Euclidean space $\E^4_2$ we can use the notion of curvature hyperbola associated to the second fundamental form  of a Lorentz surface  (see \cite{BaPaBringas}). Using the terminology from the Euclidean space, we call a minimal Lorentz surfaces \textit{super-conformal} if at each point the Gauss curvature and the normal curvature satisfy the equality $K^2 - \varkappa^2 = 0$.

\vskip 3mm
In this paper we study surfaces for which $K^2 - \varkappa^2 >0$, so we assume that $\mu ^2 - \nu^2 \neq 0$.
Then, from \eqref{E: Integr.cond.} we obtain that
\begin {equation*}
\gamma_1 = \frac{1}{4} y \left(\ln\left|\mu^2 - \nu^2\right| \right), \qquad \gamma_2 = \frac{1}{4} x \left(\ln\left|\mu^2 - \nu^2\right| \right).
\end {equation*}
Taking into account that $\gamma_{1} = -\ds{\frac{f_v}{f^2}}$, $\gamma_{2} = -\ds{\frac{f_u}{f^2}}$, we get
\begin {equation*}
x \left(\left|\mu^2 - \nu^2\right| f^4  \right) = 0, \qquad y \left(\left|\mu^2 - \nu^2\right| f^4  \right) = 0,
\end {equation*}
which imply that the function  $\left|\mu^2 - \nu^2\right| f^4 $ is constant. Hence,  
 $$f^2 = \frac{c^2}{\sqrt{\left|\mu^2 - \nu^2\right|}},$$ 
where $c$ is a non-zero constant. After the change of the parameters
\begin {equation*}
\bar{u} = c\,u, \quad\quad \bar{v} = c\,v,
\end {equation*}
we may assume that 
\begin {equation} \label{E:canonical param}
f(u, v) = \ds{\frac{1}{\sqrt[4]{\left|\mu^2 - \nu^2\right|}}}.
\end {equation}

Following the terminology in \cite{GM}, we call the parameters $(u,v)$ \textit{canonical} if 
\begin {equation*} 
E(u, v) = - G(u,v) = \ds{\frac{1}{\sqrt{\left|\mu^2 - \nu^2\right|}}}; \qquad F(u,v) = 0.
\end {equation*}
Note that any minimal Lorentz surface  of general type  locally admits canonical parameters.

Now, we assume that the surface is parametrized by canonical parameters. Then by use of \eqref{E: Integr.cond.} and \eqref{E:canonical param} we obtain that 
the functions $\gamma_1$, $\gamma_2$, $\beta_1$,  and $\beta_2$ are expressed in terms of $\mu$ and $\nu$ as follows
\begin {equation} \label{E:gammabetta} 
\begin {array}{ll}
\vspace{6mm} \gamma_1 = \left(\sqrt[4]{\left|\mu^2 - \nu^2\right|}\right)_v, & \quad \gamma_2 = \left(\sqrt[4]{\left|\mu^2 - \nu^2\right|}\right)_u,\\
\vspace{2mm} 
\beta_1 = \ds{\frac{\sqrt[4]{\left|\mu^2 - \nu^2\right|}}{2}\,\left(\ln\left|\frac{\mu + \nu}{\mu - \nu}\right|\right)_v}, & 
\quad \beta_2 = \ds{\frac{\sqrt[4]{\left|\mu^2 - \nu^2\right|}}{2}\,\left(\ln\left|\frac{\mu + \nu}{\mu - \nu}\right|\right)_u}.
\end{array}
\end {equation}

\noindent
Using \eqref{E:gammabetta} and the first two equalities of \eqref{E: Integr.cond.} we obtain the following partial differential equations for the functions $\mu$ and $\nu$ 
\begin{equation} \label{E:Delta}
\begin{array}{l}
\vspace{2mm} 
\ds{\sqrt{\left|\mu^2 - \nu^2\right|}}\,\Delta^h \ln \left|\mu^2 - \nu^2\right| = - 4\varepsilon (\mu^2 + \nu^2)\\
\vspace{2mm} 
\ds{\sqrt{\left|\mu^2 - \nu^2\right|}}\,\Delta^h \ln\left|\frac{\mu + \nu}{\mu - \nu}\right| = -4\,\varepsilon\mu\,\nu,
\end{array}
\end{equation}
where $\Delta^h = \frac{\partial^2}{\partial u^2} - \frac{\partial^2}{\partial v^2}$ is the hyperbolic Laplace operator.

\vskip 3mm

Now we can prove the following  fundamental Bonnet-type theorem for the class of minimal Lorentz surfaces of general type.

\begin{thm} \label{T:Fundamental}
Let $\mu(u, v)$ and $\nu(u, v)$ be two smooth functions, defined in a domain ${\mathcal D}, \,\, {\mathcal D} \subset {\R}^2$, and satisfying the conditions:
\begin{equation*}
\begin{array}{l}
\vspace{2mm} 
\mu\,\nu \neq 0; \qquad\quad \mu^2 - \nu^2\neq0;\\
\vspace{2mm} 
\ds{\sqrt{\left|\mu^2 - \nu^2\right|}}\,\Delta^h \ln \left|\mu^2 - \nu^2\right| = - 4\varepsilon (\mu^2 + \nu^2);\\
\vspace{2mm} 
\ds{\sqrt{\left|\mu^2 - \nu^2\right|}}\,\Delta^h \ln\left|\frac{\mu + \nu}{\mu - \nu}\right| = -4\,\varepsilon\mu\,\nu,
\end{array}
\end{equation*}
where $\varepsilon = \pm 1$. Let $\{x_{0},\,y_{0},\,(n_1)_{0},\,(n_2)_{0}\}$ be an orthonormal frame at
a point $p_{0}\in\E^4_2$  such that $\langle x_0, x_0 \rangle = 1$, $\langle y_0, y_0 \rangle = -1$, $\langle (n_1)_{0}, (n_1)_{0} \rangle = \varepsilon$, $\langle (n_2)_{0}, (n_2)_{0} \rangle = -\varepsilon$. Then there exist a subdomain ${\mathcal D}_0 \subset {\mathcal D}$
and a unique minimal Lorentz surface of general type $M^2_1: z = z(u,v), \,\, (u,v) \in {\mathcal D}_0$, such that $M^2_1$ passes through $p_0$,  the functions  $\mu(u,v)$, $\nu(u,v)$ are the geometric functions of $M^2_1$, $\{x_{0},\,y_{0},\,(n_1)_{0},\,(n_2)_{0}\}$ is the geometric frame of $M^2_1$ at the point $p_0$, $\varepsilon = 1$ (resp. $\varepsilon = -1$) in the case the geometric normal vector field $n_1$ is spacelike (resp. timelike). Furthermore, $(u, v)$ are canonical parameters of the surface.
\end{thm}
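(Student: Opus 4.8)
The plan is to run the standard Bonnet-type (Frenet/integrability) argument: use the hypothesized PDEs to reconstruct all the connection coefficients as explicit functions of $\mu$ and $\nu$, feed them into the Frenet system \eqref{E:Frenet-general}, verify that this linear system is completely integrable, and then appeal to the uniqueness/existence theorem for such systems (Frobenius) together with the initial frame at $p_0$. First I would set $f(u,v) = \left(\left|\mu^2-\nu^2\right|\right)^{-1/4}$, which defines a Lorentzian metric $g = f^2(du\otimes du - dv\otimes dv)$ on $\mathcal D$, and define the functions $\gamma_1,\gamma_2,\beta_1,\beta_2$ by formulas \eqref{E:gammabetta}. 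One checks that $\gamma_1 = -f_v/f^2$, $\gamma_2 = -f_u/f^2$ automatically, and that the third through sixth equations of \eqref{E: Integr.cond.} (i.e. the equations $x(\mu) = 2\mu\gamma_2 + \nu\beta_2$, etc.) hold as algebraic identities once $\gamma_i,\beta_i$ are given by \eqref{E:gammabetta} — this is a direct computation using only the chain rule, no PDE needed. The two hypothesized PDEs are then exactly the first two equations of \eqref{E: Integr.cond.}, the Gauss and Ricci equations.

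Next I would write the Frenet system \eqref{E:Frenet-general} as a first-order linear PDE system for the $\E^4_2$-valued frame $\Phi = (x, y, n_1, n_2)$: $\Phi_u = \Phi\, \mathcal A(u,v)$, $\Phi_v = \Phi\, \mathcal B(u,v)$, where $\mathcal A,\mathcal B$ are the $4\times4$ matrices read off from \eqref{E:Frenet-general} (recall $z_u = f x$, $z_v = f y$, so $x = z_u/f$ etc., and $\nabla'_{z_u} = f\nabla'_x$). The compatibility condition $\mathcal A_v - \mathcal B_u = \mathcal A\mathcal B - \mathcal B\mathcal A$ must be verified component by component; by the general structure of such surface-theoretic systems this reduces precisely to the Gauss equation (first line of \eqref{E: Integr.cond.}), the Codazzi equations (which here, because of the special adapted frame, collapse into the four equations $x(\mu)=\ldots$ already seen to be identities), and the Ricci equation (second line of \eqref{E: Integr.cond.}). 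Since the first and second of these are our hypotheses and the rest are identities, compatibility holds, so by the Frobenius theorem there is a subdomain $\mathcal D_0 \ni (u_0,v_0)$ and a unique solution $\Phi$ with $\Phi(u_0,v_0) = (x_0,y_0,(n_1)_0,(n_2)_0)$. One then checks that the pseudo-orthonormality relations $\langle x,x\rangle=1$, $\langle y,y\rangle=-1$, $\langle n_1,n_1\rangle=\varepsilon$, $\langle n_2,n_2\rangle=-\varepsilon$, and all mixed products vanishing, are preserved: the Gram matrix $G(u,v) = \Phi^{\mathsf T} J \Phi$ (with $J = \mathrm{diag}(1,1,-1,-1)$ appropriately) satisfies a linear ODE along any path and equals the required constant matrix at $p_0$ because $\mathcal A, \mathcal B$ are skew with respect to that Gram form — this is the reason for the sign choices in \eqref{E:Frenet-general}.

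Having the frame, I would recover the immersion by setting $z(u,v) = p_0 + \int (f\,x\,du + f\,y\,dv)$; the $1$-form $\omega = f x\,du + f y\,dv$ is closed precisely because $(fx)_v = f_v x + f x_v$ and $(fy)_u = f_u y + f y_u$, and the Frenet formulas give $f x_v = f(\nabla'_{z_v}x)/f \cdot(\ldots)$ — more directly, $\partial_v(fx) - \partial_u(fy) = \nabla'_{z_v} z_u - \nabla'_{z_u} z_v = 0$ since mixed partials commute, i.e. this is automatic. Then $z_u = fx$, $z_v = fy$ show the induced metric is $g$, hence Lorentz with isothermal parameters $(u,v)$; the second fundamental form of $z$ is read off from the normal components of $\nabla'_{z_u}z_u$, etc., giving $\sigma(x,x) = \sigma(y,y) = \nu n_1$, $\sigma(x,y) = \mu n_2$, so $H = \tfrac12(\sigma(x,x)-\sigma(y,y)) = 0$ and the surface is minimal; the first normal space is $\mathrm{span}\{n_1,n_2\}$, two-dimensional since $\mu\nu\neq0$; and by \eqref{E:Kandkappa} one gets $K^2-\varkappa^2 = (\mu^2-\nu^2)^2 > 0$, so it is of general type with the prescribed geometric functions, geometric frame at $p_0$, and canonical parameters by construction of $f$. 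Uniqueness follows from the uniqueness in Frobenius plus the fact that, by the earlier section, the geometric frame and canonical parameters of any general-type surface satisfy exactly this Frenet system with exactly these coefficients, so two such surfaces through $p_0$ with the same $(\mu,\nu)$ and initial frame have congruent frames, hence congruent immersions. The main obstacle is the bookkeeping in verifying the compatibility condition $\mathcal A_v - \mathcal B_u = [\mathcal A,\mathcal B]$ in all sixteen matrix entries and confirming it is equivalent to the system \eqref{E: Integr.cond.}; everything else is either a short identity check or an invocation of standard theorems.
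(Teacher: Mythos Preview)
Your proposal is correct and follows essentially the same Bonnet-type argument as the paper: define the connection coefficients $\gamma_i,\beta_i$ from $\mu,\nu$ via \eqref{E:gammabetta}, verify that the Frenet system \eqref{E:Frenet-general} is completely integrable (with the Codazzi-type equations holding identically and the Gauss/Ricci equations being exactly the two hypothesized PDEs), apply Frobenius to produce the frame, show orthonormality propagates, and finally integrate $z_u=fx$, $z_v=fy$ to obtain the immersion. Your write-up is in fact a bit more complete than the paper's in spelling out why the resulting surface is minimal, of general type, and has $(u,v)$ as canonical parameters, but the underlying strategy is identical.
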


\begin{proof} 
Let us define the following functions
\begin{equation} \label{E:Definitions}
\begin {array}{ll}
\vspace{2mm} 
E = \ds{\frac{1}{\sqrt{\left|\mu^2 - \nu^2\right|}}}; & \quad G =  \ds{-\frac{1}{\sqrt{\left|\mu^2 - \nu^2\right|}}};\\
\vspace{2mm} 
\gamma_1 = \left(\sqrt[4]{\left|\mu^2 - \nu^2\right|}\right)_v; & \quad \gamma_2 = \left(\sqrt[4]{\left|\mu^2 - \nu^2\right|}\right)_u;\\
\vspace{2mm} 
\beta_1 = \ds{\frac{\sqrt[4]{\left|\mu^2 - \nu^2\right|}}{2}\,\left(\ln\left|\frac{\mu + \nu}{\mu - \nu}\right|\right)_v}; & 
\quad \beta_2 = \ds{\frac{\sqrt[4]{\left|\mu^2 - \nu^2\right|}}{2}\,\left(\ln\left|\frac{\mu + \nu}{\mu - \nu}\right|\right)_u}.
\end{array}
\end{equation}
We consider the following system of partial
differential equations for the unknown vector functions $x=x(u,v),\,y=y(u,v),\,n_1=n_1(u,v),\,n_2=n_2(u,v)$
in $\E_{2}^{4}$
\begin{equation}\label{E:eq3}
\begin{array}{ll}
\vspace{2mm}
x_{u} = - \sqrt{E}\gamma_{1}\,y + \sqrt{E} \nu\,n_1 & \qquad x_{v} = - \sqrt{-G}\gamma_{2}\,y+ \sqrt{-G}\mu\,n_2\\
\vspace{2mm}
y_{u} = - \sqrt{E}\gamma_{1}\,x + \sqrt{E} \mu\,n_2 & \qquad y_{v} = - \sqrt{-G}\gamma_{2}\,x+ \sqrt{-G} \nu\,n_1\\
\vspace{2mm}
(n_1)_{u} = - \sqrt{E}\varepsilon\,\nu\,x + \sqrt{E} \beta_{1}\,n_2 & \qquad (n_1)_{v} =  \sqrt{-G}\varepsilon\,\nu\,y + \sqrt{-G}\beta_{2}\,n_2\\
\vspace{2mm}
(n_2)_{u} = - \sqrt{E} \varepsilon\,\mu\,y + \sqrt{E} \beta_{1}\,n_1 & \qquad (n_2)_{v} = \sqrt{-G}\varepsilon\,\mu\,x + \sqrt{-G}\beta_{2}\,n_1
\end{array}
\end{equation}
Denote
$$Z=\left(\begin{array}{c}
\vspace{2mm} 
x\\
\vspace{2mm}
y\\
\vspace{2mm} 
n_1\\
\vspace{2mm} 
n_2
\end{array}\right);\quad
A=\sqrt{E}\left(\begin{array}{cccc}
\vspace{2mm} 0 & -\gamma_{1} & \nu & 0\\
\vspace{2mm} -\gamma_{1} & 0 & 0 & \mu\\
\vspace{2mm} -\varepsilon\,\nu & 0 & 0 & \beta_{1}\\
\vspace{2mm} 0 & -\varepsilon\,\mu & \beta_{1} & 0
\end{array}\right);$$

$$B=\sqrt{-G}\left(\begin{array}{cccc}
\vspace{2mm} 0 & -\gamma_{2} & 0 & \mu\\
\vspace{2mm} -\gamma_{2} & 0 & \nu & 0\\
\vspace{2mm} 0 & \varepsilon\,\nu & 0 & \beta_{2}\\
\vspace{2mm} \varepsilon\,\mu & 0 & \beta_{2} & 0
\end{array}\right).$$
Using  matrices $A$ and $B$ we can rewrite  system \eqref{E:eq3} in the form
\begin {equation}\label{E:eq4}
\begin{array}{l}
\vspace{2mm}Z_{u}=A\,Z;\\
\vspace{2mm}Z_{v}=B\,Z.
\end{array}
\end{equation}
The integrability conditions of system \eqref{E:eq4} are
$$Z_{uv}=Z_{vu},$$
or equivalently,
\begin {equation}\label{E:eq5}
{\displaystyle {\frac{\partial a_{i}^{k}}{\partial v}-\frac{\partial b_{i}^{k}}{\partial u}+\sum_{j=1}^{4}(a_{i}^{j}\,b_{j}^{k}-b_{i}^{j}\,a_{j}^{k})=0,\quad i,k=1,\dots,4,}}
\end {equation}
where $a_{i}^{j}$ and $b_{i}^{j}$ are the elements of the matrices
$A$ and $B$. By use of \eqref{E:Definitions} and  equations \eqref{E:Delta} it can be checked that  equalities \eqref{E:eq5} are fulfilled.
Hence, there exist a subset $\mathcal{D}_{1}\subset\mathcal{D}$
and unique vector functions $x=x(u,v),\,y=y(u,v),\,n_1=n_1(u,v),\,n_2=n_2(u,v),\,\,(u,v)\in\mathcal{D}_{1}$,
which satisfy system \eqref{E:eq3} and the initial conditions
$$x(u_{0},v_{0})=x_{0},\quad y(u_{0},v_{0})=y_{0},\quad n_1(u_{0},v_{0})=(n_1)_{0},\quad n_2(u_{0},v_{0})=(n_2)_{0}.$$

Now, we shall prove that the vectors $x(u,v),\,y(u,v),\,n_1(u,v),\,n_2(u,v)$
form an orthonormal frame in $\E_{2}^{4}$ for each $(u,v)\in\mathcal{D}_{1}$.
Let us consider the following functions
$$\begin{array}{lll}
\vspace{2mm}\varphi_{1}=\langle x,x\rangle-1; & \qquad\varphi_{5}=\langle x,y\rangle; & \qquad\varphi_{8}=\langle y,n_1\rangle;\\
\vspace{2mm}\varphi_{2}=\langle y,y\rangle+1; & \qquad\varphi_{6}=\langle x,n_1\rangle; & \qquad\varphi_{9}=\langle y,n_2\rangle;\\
\vspace{2mm}\varphi_{3}=\langle n_1,n_1\rangle-\varepsilon; & \qquad\varphi_{7}=\langle x,n_2\rangle; & \qquad\varphi_{10}=\langle n_1,n_2\rangle;\\
\vspace{2mm}\varphi_{4}=\langle n_2,n_2\rangle+\varepsilon;
\end{array}$$
defined for  $(u,v)\in\mathcal{D}_{1}$.
Since $x(u,v),\,y(u,v),\,n_1(u,v),\,n_2(u,v)$
satisfy \eqref{E:eq3}, for the functions $\varphi_i(u,v), \,\,i = 1, \dots, 10$ we obtain the following system
$$\begin{array}{lll}
\vspace{2mm}{\displaystyle {\frac{\partial\varphi_{i}}{\partial u}=\alpha_{i}^{j}\,\varphi_{j}},}\\
\vspace{2mm}{\displaystyle {\frac{\partial\varphi_{i}}{\partial v}=\beta_{i}^{j}\,\varphi_{j}};}
\end{array}\qquad i=1,\dots,10,$$
where $\alpha_{i}^{j},\beta_{i}^{j},\,\,i,j=1,\dots,10$ are functions of $(u,v)\in\mathcal{D}_{1}$.
This is a linear system of partial differential equations
 satisfying the initial conditions $\varphi_i(u_0,v_0) = 0, \,\,i = 1,
\dots, 10$.
 Hence, $\varphi_{i}(u,v)=0,\,\,i=1,\dots,10$ for each $(u,v)\in\mathcal{D}_{1}$.
Consequently, the vector functions $x(u,v),\,y(u,v),\,n_1(u,v),\,n_2(u,v)$ form an orthonormal frame in $\E_{2}^{4}$ for each $(u,v)\in\mathcal{D}_{1}$.

Finally, we consider the following system of partial differential equations for the vector function $z=z(u, v)$
\begin{equation}\label{E:eq6}
\begin{array}{l}
\vspace{2mm}z_{u}=\sqrt{E}\,x\\
\vspace{2mm}z_{v}=\sqrt{-G}\,y
\end{array}
\end{equation}
Using \eqref{E:eq3} we obtain that the integrability conditions $z_{uv}=z_{vu}$ of system \eqref{E:eq6} are fulfilled.
Hence, there exist a subset $\mathcal{D}_{0}\subset\mathcal{D}_{1}$
and a unique vector function $z=z(u,v)$, defined for $(u,v)\in\mathcal{D}_{0}$
and satisfying $z(u_{0},v_{0})=p_{0}$.

Consequently, the surface $M^2_1:z=z(u,v),\,\,(u,v)\in\mathcal{D}_{0}$
satisfies the assertion of the theorem.

\end{proof}

The meaning of Theorem \ref{T:Fundamental} is that
\textit{any minimal Lorentz surface of general type  is determined up to a rigid motion in $\E^4_2$ by two  invariant functions $\mu$ and $\nu$ 
satisfying the system of  natural partial differential equations  \eqref{E:Delta}}.

\vskip 3mm

Using equalities \eqref{E:Kandkappa} we can express the  functions $\mu$ and $\nu$ in terms of the Gauss curvature $K$ and the normal curvature $\varkappa$. More precisely,  the following relations hold true
\begin{equation*}
\begin{array}{l}
\vspace{2mm} 
\ds{\left|\mu^2 - \nu^2\right|} = \ds{\sqrt{K^2 - \varkappa^2}};\\
\vspace{2mm} 
\ds{\left|\frac{\mu + \nu}{\mu - \nu}\right|} = \ds{\sqrt{\frac{K + \varepsilon\,\varkappa}{K - \varepsilon\,\varkappa}}}.
\end{array}
\end{equation*}
Hence, equations  \eqref{E:Delta} can be rewritten in terms of  $K$ and $\varkappa$ as follows
\begin{equation} \label{E:Delta_new}
\begin{array}{l}
\vspace{2mm} 
\ds{\sqrt[4]{K^2 - \varkappa^2}}\,\Delta^h \ln(K^2 - \varkappa^2)  = 8 K\\
\vspace{2mm} 
\ds{\sqrt[4]{K^2 - \varkappa^2}}\,\Delta^h \ln \frac{K + \varepsilon\,\varkappa}{K - \varepsilon\,\varkappa}  = 4 \varepsilon\,\varkappa.
\end{array}
\end{equation}

Then the fundamental theorem for minimal Lorentz surfaces of general type can be stated in terms of the curvatures $K$ and $\varkappa$ as follows.

\begin{thm} \label{T:Fundamental-2}
Let $K(u, v) \neq 0$ and $\varkappa(u, v) \neq 0$ be two smooth functions, 
defined in a domain ${\mathcal D}, \,\, {\mathcal D} \subset {\R}^2$, such that  $K^2 - \varkappa^2 > 0$, and satisfying the equations
\begin{equation*} 
\begin{array}{l}
\vspace{2mm} 
\ds{\sqrt[4]{K^2 - \varkappa^2}}\,\Delta^h  \ln (K^2 - \varkappa^2)  = 8 K\\
\vspace{2mm} 
\ds{\sqrt[4]{K^2 - \varkappa^2}}\,\Delta^h \ln \frac{K + \varepsilon\,\varkappa}{K - \varepsilon\,\varkappa}  = 4 \varepsilon\,\varkappa
\end{array}
\end{equation*}
where $\varepsilon = \pm 1$.
Then there exists a unique (up to a rigid motion) minimal Lorentz surface of general type such that $K(u, v)$  and $\varkappa(u, v)$ are the Gauss curvature and the normal curvature, respectively; $\varepsilon = 1$ (resp. $\varepsilon = - 1$) in the case the geometric normal vector field $n_1$ is spacelike (resp. timelike). Furthermore,  $(u,v)$ are the canonical parameters of the surface.
\end{thm}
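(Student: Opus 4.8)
The plan is to deduce Theorem \ref{T:Fundamental-2} directly from Theorem \ref{T:Fundamental} together with the already-established relations \eqref{E:Kandkappa}, rather than repeating the ODE-integration argument. The point is that \eqref{E:Kandkappa} and the sign conditions $\mu\nu\neq 0$, $\mu^2-\nu^2\neq 0$ let us pass back and forth between the pair $(\mu,\nu)$ and the pair $(K,\varkappa)$, so a solution of one system yields a solution of the other.

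First I would address existence. Given $K$ and $\varkappa$ with $K\neq0$, $\varkappa\neq0$, $K^2-\varkappa^2>0$ satisfying the system of Theorem \ref{T:Fundamental-2}, I solve \eqref{E:Kandkappa} for $\mu$ and $\nu$. From $K=-\varepsilon(\mu^2+\nu^2)$ and $\varkappa=-2\mu\nu$ one gets $\mu^2+\nu^2=-\varepsilon K=|K|$ (note $-\varepsilon K>0$ is forced since $\mu^2+\nu^2>0$, which fixes the sign of $\varepsilon$ relative to $K$) and $2\mu\nu=-\varkappa$, hence $(\mu\pm\nu)^2=|K|\mp\varkappa$. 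Since $K^2-\varkappa^2>0$ gives $|K|>|\varkappa|$, both $|K|-\varkappa$ and $|K|+\varkappa$ are positive, so we may set $\mu+\nu=\sqrt{|K|-\varkappa}$ and $\mu-\nu=\pm\sqrt{|K|+\varkappa}$ (a discrete choice of orientation), and thereby define smooth functions $\mu,\nu$ on $\mathcal D$ with $\mu\nu\neq0$ and $\mu^2-\nu^2\neq0$; moreover $|\mu^2-\nu^2|=\sqrt{K^2-\varkappa^2}$ and $\left|\frac{\mu+\nu}{\mu-\nu}\right|=\sqrt{\frac{K+\varepsilon\varkappa}{K-\varepsilon\varkappa}}$ as displayed before \eqref{E:Delta_new}. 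Substituting these into the system \eqref{E:Delta_new} of Theorem \ref{T:Fundamental-2}, which is exactly \eqref{E:Delta} rewritten, shows that $(\mu,\nu)$ satisfies the hypotheses of Theorem \ref{T:Fundamental}. Applying that theorem (with the given orthonormal frame at $p_0$, or an arbitrary one since we only claim uniqueness up to rigid motion) produces a minimal Lorentz surface of general type on a subdomain $\mathcal D_0$ whose geometric functions are $\mu,\nu$ and whose canonical parameters are $(u,v)$; by \eqref{E:Kandkappa} its Gauss and normal curvatures are precisely the prescribed $K$ and $\varkappa$, and the sign of $\varepsilon$ is tied to the causal character of $n_1$ exactly as in Theorem \ref{T:Fundamental}.

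Next I would address uniqueness. Suppose $M^2_1$ and $\widetilde M^2_1$ are two minimal Lorentz surfaces of general type, parametrized by canonical parameters $(u,v)\in\mathcal D$, both having Gauss curvature $K$ and normal curvature $\varkappa$. By Proposition \ref{P:non-zero K} and the discussion preceding \eqref{E:canonical param}, each carries its geometric frame field and geometric functions $\mu,\nu$ (resp. $\widetilde\mu,\widetilde\nu$) related to $K,\varkappa$ by \eqref{E:Kandkappa}. From $\mu^2+\nu^2=\widetilde\mu^2+\widetilde\nu^2=-\varepsilon K$ and $\mu\nu=\widetilde\mu\widetilde\nu=-\tfrac12\varkappa$ we conclude $\{\mu,\nu\}=\{\widetilde\mu,\widetilde\nu\}$ pointwise, and by continuity and connectedness (passing to a subdomain if necessary) either $(\mu,\nu)=(\widetilde\mu,\widetilde\nu)$ or $(\mu,\nu)=(-\widetilde\mu,-\widetilde\nu)$ or a swap — but swapping $\mu\leftrightarrow\nu$ changes $K^2-\varkappa^2$'s role only in the off-diagonal second fundamental form, and can be absorbed by the residual frame ambiguity. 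Having matched the invariant functions, the uniqueness clause of Theorem \ref{T:Fundamental} gives that $M^2_1$ and $\widetilde M^2_1$ coincide up to a rigid motion of $\E^4_2$.

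The main obstacle I anticipate is bookkeeping of the discrete sign ambiguities: extracting $(\mu,\nu)$ from $(K,\varkappa)$ via square roots is only unique up to overall sign and up to the interchange $\mu\leftrightarrow\nu$, and one must check that each such choice corresponds to the same surface modulo a rigid motion (reflection in a normal direction handles the overall sign; a rotation of the canonical frame or a reflection handles the interchange), so that the statement "unique up to a rigid motion" is genuinely correct. I would also be careful that the condition $K^2-\varkappa^2>0$ together with $K\neq0$ forces $-\varepsilon K>0$, so $\varepsilon$ is not free but determined by $\mathrm{sign}(K)$ — this is consistent with the geometric interpretation of $\varepsilon$ via the causal character of $n_1$. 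Everything else is a direct translation through the algebraic identities already recorded in the excerpt.
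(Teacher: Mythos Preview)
Your proposal is correct and follows essentially the same route as the paper: the paper does not give a separate formal proof of Theorem~\ref{T:Fundamental-2} but presents it as a restatement of Theorem~\ref{T:Fundamental} via the displayed algebraic identities $|\mu^2-\nu^2|=\sqrt{K^2-\varkappa^2}$ and $\left|\frac{\mu+\nu}{\mu-\nu}\right|=\sqrt{\frac{K+\varepsilon\varkappa}{K-\varepsilon\varkappa}}$, which is exactly what you do. Your write-up is in fact more careful than the paper's, since you spell out the recovery of $(\mu,\nu)$ from $(K,\varkappa)$ and track the discrete sign/swap ambiguities needed for the uniqueness claim.
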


Finally, the background system of partial differential equations for minimal Lorentz surfaces of general type in $\E^4_2$ is system
\eqref{E:Delta_new} or equivalently, system \eqref{E:Delta}.

\section{Example}

In this section we shall construct an example of a minimal Lorentz surface of general type applying  the procedure given in the proof of Theorem \ref{T:Fundamental} to a concrete solution to system \eqref{E:Delta}.

Let us consider the following functions
$$\mu(u) = \frac{2}{(1-4 \cos^2 \frac{u}{\sqrt[4]{3}})^{\frac{3}{2}}}; \qquad \nu(u) = \frac{1+ 2 \cos^2 \frac{u}{\sqrt[4]{3}}}{\sin^2 \frac{u}{\sqrt[4]{3}} (1 - 4 \cos^2 \frac{u}{\sqrt[4]{3}})^{\frac{3}{2}}},$$
where $u \in \left(\sqrt[4]{3}\, \frac{\pi}{3}; 2 \sqrt[4]{3} \,\frac{\pi}{3}\right)$. For simplicity we denote $t = \frac{u}{\sqrt[4]{3}}$, 
$t \in \left(\frac{\pi}{3}; 2\frac{\pi}{3}\right)$.
Using the functions $\mu$ and $\nu$ defined above, we obtain
$$\mu^2 - \nu^2 = \frac{3}{\sin^4 t (1 - 4 \cos^2 t)^2}; \qquad \frac{\mu+\nu}{\mu-\nu} = \frac{3}{1- 4 \cos^2t}.$$
Direct computations show that $\mu(u)$ and $\nu(u)$ satisfy system  \eqref{E:Delta} in the case $\varepsilon = -1$.
Let us consider the following functions
\begin{equation*} \label{E:Def-Ex}
\begin {array}{ll}
\vspace{2mm} 
E = \ds{\frac{\sin^2 t(1-4\cos^2 t)}{\sqrt{3}}}; & \quad G =  - \ds{\frac{\sin^2 t(1-4\cos^2 t)}{\sqrt{3}}};\\
\vspace{2mm} 
\gamma_1 = 0; & \quad \gamma_2 = - \ds{\frac{\cos t(5-8\cos^2 t)}{\sin^2 t(1-4\cos^2 t)^{\frac{3}{2}}}};\\
\vspace{2mm} 
\beta_1 = 0; & 
\quad \beta_2 = - \ds{\frac{4 \cos t}{(1-4\cos^2 t)^{\frac{3}{2}}}}.
\end{array}
\end{equation*}
Now, system \eqref{E:eq3} for the vector functions $x=x(u,v),\,y=y(u,v),\,n_1=n_1(u,v),\,n_2=n_2(u,v)$ takes the form
\begin{equation*}
\begin{array}{ll}
\vspace{2mm}
x_{u} = \frac{1+2 \cos^2t}{\sqrt[4]{3} \sin t (1-4\cos^2 t)}\,n_1 & \quad x_{v} = \frac{\cos t(5-8\cos^2 t)}{\sqrt[4]{3} \sin t (1-4\cos^2 t)}\,y + \frac{2 \sin t}{\sqrt[4]{3} (1-4\cos^2 t)}\,n_2\\
\vspace{2mm}
y_{u} = \frac{2 \sin t}{\sqrt[4]{3} (1-4\cos^2 t)}\,n_2 & \quad y_{v} = \frac{\cos t(5-8\cos^2 t)}{\sqrt[4]{3} \sin t (1-4\cos^2 t)}\,x+ \frac{1+2 \cos^2t}{\sqrt[4]{3} \sin t (1-4\cos^2 t)}\,n_1\\
\vspace{2mm}
(n_1)_{u} = \frac{1+2 \cos^2t}{\sqrt[4]{3} \sin t (1-4\cos^2 t)}\,x  & \quad (n_1)_{v} = - \frac{1+2 \cos^2t}{\sqrt[4]{3} \sin t (1-4\cos^2 t)}\,y - \frac{4 \sin t\cos t}{\sqrt[4]{3} (1-4\cos^2 t)}\,n_2\\
\vspace{2mm}
(n_2)_{u} = \frac{2 \sin t}{\sqrt[4]{3} (1-4\cos^2 t)} \,y  & \quad (n_2)_{v} = - \frac{2 \sin t}{\sqrt[4]{3} (1-4\cos^2 t)}\,x - \frac{4 \sin t\cos t}{\sqrt[4]{3} (1-4\cos^2 t)}\,n_1
\end{array}
\end{equation*}

Denoting 
\begin{equation} \label{E:eq2-ex}
\begin{array}{ll}
\vspace{2mm}
\varphi(t) = \frac{1+2 \cos^2t}{\sqrt[4]{3} \sin t (1-4\cos^2 t)}; & \qquad \psi(t) = \frac{2 \sin t}{\sqrt[4]{3} (1-4\cos^2 t)};\\
\vspace{2mm}
p(t) = \frac{\cos t(5-8\cos^2 t)}{\sqrt[4]{3} \sin t (1-4\cos^2 t)}; &\qquad q(t) = \frac{4 \sin t\cos t}{\sqrt[4]{3} (1-4\cos^2 t)},
\end{array}
\end{equation}
we rewrite the above system as follows
\begin{equation}\label{E:eq3-ex}
\begin{array}{ll}
\vspace{2mm}
x_{u} = \varphi(t) \,n_1 & \qquad \quad x_{v} = p(t)\,y + \psi(t)\,n_2\\
\vspace{2mm}
y_{u} = \psi(t)\,n_2 & \qquad \quad y_{v} = p(t)\,x+ \varphi(t)\,n_1\\
\vspace{2mm}
(n_1)_{u} = \varphi(t) \,x  & \qquad \quad (n_1)_{v} = - \varphi(t)\,y - q(t)\,n_2\\
\vspace{2mm}
(n_2)_{u} = \psi(t)\,y  & \qquad \quad (n_2)_{v} = - \psi(t) \,x - q(t)\,n_1
\end{array}
\end{equation}
Let us consider the vector functions $X = x-n_1$ and $Y = y -n_2$. It follows from \eqref{E:eq3-ex} that 
$$ X_u = -\varphi X; \qquad Y_u = -\psi Y.$$
Hence, for each coordinate function $X^k$ ($k = 1, \dots, 4$) of $X$ we have $X^k_u = - \varphi X^k$. The last equality implies that 
\begin{equation}\label{E:eq4-ex}
\ln X^k = -\int{\varphi(u) du} + C^k(v),
\end{equation} 
where $C^k(v)$, $k = 1, \dots, 4$ is a function of $v$. Calculating the integral in \eqref{E:eq4-ex} we obtain
$$X^k = \frac{\cos 2t - \cos t}{\sin t \sqrt{1 - 4 \cos^2 t}}\, \xi^k(v), \quad k = 1, \dots, 4$$
for some functions $\xi^k(v)$. We denote $a(t)= \frac{\cos 2t - \cos t}{\sin t \sqrt{1 - 4 \cos^2 t}}$. Then the vector function $X(u,v)$ is expressed as 
\begin{equation}\label{E:eq4-X}
X(u,v) = a(\frac{u}{\sqrt[4]{3}}) \, \xi(v)
\end{equation}
for some vector function $\xi(v)$.

Similarly, for each coordinate function $Y^k$ ($k = 1, \dots, 4$) of $Y$ we have $Y^k_u = - \psi Y^k$, which implies that 
\begin{equation}\label{E:eq5-ex}
\ln Y^k = -\int{\psi(u) du} + D^k(v),
\end{equation} 
where $D^k(v)$, $k = 1, \dots, 4$ is a function of $v$. Calculating the integral in \eqref{E:eq5-ex} we get
$$Y^k = \frac{1+2\cos t}{\sqrt{1 - 4 \cos^2 t}}\, \eta^k(v), \quad k = 1, \dots, 4,$$
 $\eta^k(v)$ being a function of $v$. Denoting $b(t)= \frac{1+2\cos t}{\sqrt{1 - 4 \cos^2 t}}$, we obtain that the vector function $Y(u,v)$ is expressed as 
\begin{equation}\label{E:eq5-Y}
Y(u,v) = b(\frac{u}{\sqrt[4]{3}}) \, \eta(v)
\end{equation}
for some vector function $\eta(v)$.

Having in mind that $x$, $y$, $n_1$, $n_2$ should satisfy $\langle x, x\rangle = 1$, $\langle y, y\rangle = 1$, $\langle n_1, n_1\rangle = -1$, $\langle n_2, n_2\rangle = 1$, we get $\langle X, X\rangle = 0$, $\langle Y, Y\rangle = 0$, $\langle X, Y\rangle = 0$, and hence 
$\langle \xi, \xi\rangle = 0$, $\langle \eta, \eta \rangle = 0$, $\langle \xi, \eta\rangle = 0$. Let us consider the following vector functions
\begin{equation}\label{E:eq6-ex}
\begin{array}{l}
\vspace{2mm}
\xi(v) = \left(\cos \frac{2v}{\sqrt[4]{3}}, \sin \frac{2v}{\sqrt[4]{3}}, - \cos \frac{v}{\sqrt[4]{3}}, -\sin \frac{v}{\sqrt[4]{3}} \right);\\
\vspace{2mm}
\eta(v) = \left(-\sin \frac{2v}{\sqrt[4]{3}}, \cos \frac{2v}{\sqrt[4]{3}}, - \sin \frac{v}{\sqrt[4]{3}}, \cos \frac{v}{\sqrt[4]{3}} \right).
\end{array}
\end{equation} 

Since $X = x - n_1$, $Y = y - n_2$,  from \eqref{E:eq4-X} and \eqref{E:eq5-Y} we get 
\begin{equation}\label{E:eq7-ex}
x = a(t) \,\xi(v) + n_1; \qquad y = b(t) \, \eta(v) + n_2,
\end{equation}
which imply $x_v = a \, \xi_v + (n_1)_v$, $y_v = b \, \eta_v + (n_2)_v$. Now, using \eqref{E:eq3-ex} and \eqref{E:eq7-ex}, we obtain
\begin{equation}\label{E:eq8-ex}
\begin{array}{l}
\vspace{2mm}
(\varphi + \psi +p + q) \,n_1 = b \,\eta_v - (p+ \psi)\, a \, \xi;\\
\vspace{2mm}
(\varphi + \psi +p + q)\,n_2 = a \,\xi_v - (p+ \varphi)\, b \, \eta.
\end{array}
\end{equation}
Equalities \eqref{E:eq8-ex} together with  \eqref{E:eq2-ex} and \eqref{E:eq6-ex} allow us to find the vector functions $n_1$ and $n_2$. They are expressed as follows
\begin{equation}\label{E:eq9-ex}
\begin{array}{l}
\vspace{2mm}
n_1(t,s) = \frac{1}{\sin t \sqrt{1-4\cos^2 t}} \left(\cos t \cos 2s, \cos t \sin 2s, \cos 2t \cos s, \cos 2t \sin s\right);\\
\vspace{2mm}
n_2(t,s) = \frac{1}{\sin t \sqrt{1-4\cos^2 t}} \left(\sin t \sin 2s, -\sin t \cos 2s, \sin 2t \sin s, -\sin 2t \cos s\right),
\end{array}
\end{equation}
where $ s = \frac{v}{\sqrt[4]{3}}$.

Using  \eqref{E:eq7-ex} and \eqref{E:eq9-ex} we find the vector functions $x$ and $y$:
\begin{equation}\label{E:eq10-ex}
\begin{array}{l}
\vspace{2mm}
x(t,s) = \frac{1}{\sin t \sqrt{1-4\cos^2 t}} \left(\cos 2t \cos 2s, \cos 2t \sin 2s, \cos t \cos s, \cos t \sin s\right);\\
\vspace{2mm}
y(t,s) = \frac{1}{\sin t \sqrt{1-4\cos^2 t}} \left(-\sin 2t \sin 2s, \sin 2t \cos 2s, -\sin t \sin s, \sin t \cos s\right).
\end{array}
\end{equation}

Now we consider system  \eqref{E:eq6}. In our example it takes the form
\begin{equation*}
\begin{array}{l}
\vspace{2mm}
z_u = \frac{\sin t \sqrt{1-4\cos^2 t}} {\sqrt[4]{3}} \, x\\
\vspace{2mm}
z_v = \frac{\sin t \sqrt{1-4\cos^2 t}} {\sqrt[4]{3}} \, y\\
\end{array}
\end{equation*}
Using that  $t = \frac{u}{\sqrt[4]{3}}$, $s = \frac{v}{\sqrt[4]{3}}$, we rewrite the above system as
\begin{equation}\label{E:eq11-ex}
\begin{array}{l}
\vspace{2mm}
z_t = \sin t \sqrt{1-4\cos^2 t} \, x\\
\vspace{2mm}
z_s = \sin t \sqrt{1-4\cos^2 t} \, y\\
\end{array}
\end{equation}
Now,  system \eqref{E:eq11-ex} together with \eqref{E:eq10-ex} imply 
\begin{equation}\label{E:eq12-ex}
z(t,s) =  \left( \frac{1}{2}\sin 2t \cos 2s, \frac{1}{2}\sin 2t \sin 2s, \sin t \cos s, \sin t \sin s\right) + C,
\end{equation}
where $C$ is a constant vector. 

The vector function $z(t,s)$ given by \eqref{E:eq12-ex} determines a minimal Lorentz surface of general type in $\E^4_2$. Note that $(t,s)$ are not the canonical parameters of the surface.  With respect to canonical parameters the surface is  parametrized as follows
\begin{equation*}\label{E:eq13-ex}
z(u,v) =  \left( \frac{1}{2}\sin 2\frac{u}{\sqrt[4]{3}} \cos 2\frac{v}{\sqrt[4]{3}}, \frac{1}{2}\sin 2\frac{u}{\sqrt[4]{3}} \sin 2\frac{v}{\sqrt[4]{3}}, \sin \frac{u}{\sqrt[4]{3}} \cos \frac{v}{\sqrt[4]{3}}, \sin \frac{u}{\sqrt[4]{3}} \sin \frac{v}{\sqrt[4]{3}}\right) + C.
\end{equation*}

Finally, given a concrete solution $(\mu, \nu)$ to system \eqref{E:Delta}  we obtained an example of a minimal Lorentz surface of general type parametrized by canonical parameters.

\vskip 5mm \textbf{Acknowledgments:}
The  authors are partially supported by the National Science Fund,
Ministry of Education and Science of Bulgaria under contract DN 12/2.

\end{document}